\newtheorem{theorem}{Theorem}
\newtheorem{lemma}[theorem]{Lemma}
\newtheorem{proposition}[theorem]{Proposition}
\newtheorem{corollary}[theorem]{Corollary}
\begin{document}
\title{On the Goussarov-Polyak-Viro Finite-Type Invariants and the Virtualization Move}
\author{Micah W. Chrisman}
\begin{abstract} In this paper, it is shown that there are no nonconstant Goussarov-Polyak-Viro finite-type invariants that are invariant under the virtualization move.  As an immediate corollary, we obtain the theorem of \cite{myfirst} which states none of the Birman coefficients of the Jones-Kauffman polynomial are of GPV finite type. 
\end{abstract}
\maketitle
\section{Introduction}
\footnote{This is a preprint of a paper submitted for consideration for publication to the Journal of Knot Theory and its Ramifications.}In the realm of virtual knots, there are two notions of finite-type invariant.  One method, proposed by Kauffman in \cite{virtkauff} is closely related to the Vassiliev theory for classical knots.  The class of finite-type invariants, due to Goussarov-Polyak-Viro \cite{MR1763963} (abbreviated GPV in what follows), are themselves all Kauffman finite-type invariants.  However, they are defined by a different filtration in the set of virtual knots.

The two classes of invariants possess similar structures.  In the classical case, the Vassiliev finite-type invariants have weight systems arising from semisimple Lie algebras.In the virtual case, the GPV finite-type invariants have weight systems arising from the Lie bialgebras associated to semisimple Lie algebras \cite{Polyak}.

On the other hand, not all Kauffman finite-type invariants are of GPV finite-type.  This was first observed for small orders by Kauffman in \cite{virtkauff}.  The result was sharpened in \cite{myfirst} to show that while all of the Birman coefficients $v_n$ are Kauffman finite-type of order $\le n$, none are of GPV finite-type of order $\le m$ for any $m$.

A key player in the proof of this result is the virtualization move. In a small neighborhood of the crossing the virtualization move is given by:
\begin{figure}[h]
$\begin{array}{c} \scalebox{.25}{\psfig{figure=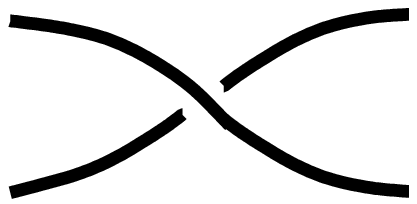}} \end{array} \leftrightarrow \begin{array}{c} \scalebox{.25}{\psfig{figure=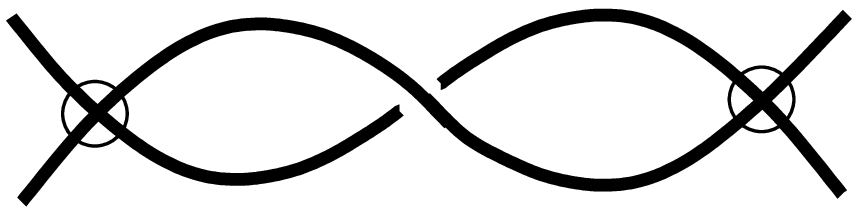}} \end{array}
$
\caption{The Virtualization Move} \label{virtmove}
\end{figure}

While this move is not a virtual isotopy move in and of itself, there are numerous virtual knot invariants which are invariant under the virtualization move. The virtualization move was discovered by Kauffman in \cite{virtkauff} in his investigation of the Jones polynomial. The Jones-Kauffman polynomial is invariant under the virtualization move. The refined theorem in \cite{myfirst} uses this invariance together with a twist sequence argument to show that the Birman coefficients are not of GPV finite type.

This leads to the question which is the subject of this paper: Are there any nonconstant GPV finite-type invariants which are invariant under the virtualization move? We answer this question in the negative with the following theorem.

\begin{theorem}\label{bigthm} If $v$ is a GPV finite-type invariant of virtual knots or long virtual knots which is invariant under the virtualization move, then $v$ is constant.  More specifically, if $v$ is a nonconstant GPV finite-type invariant, then there are knots $K$ and $K'$ such that $K$ and $K'$ are obtained from one another by a virtualization move and $v(K) \ne v(K')$.
\end{theorem}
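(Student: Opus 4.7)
My plan is to use the Polyak--Viro theorem to represent the GPV finite-type invariant $v$ as a Gauss diagram formula, and then to translate the hypothesis of virtualization-invariance into a combinatorial constraint on that formula. Recall that by Polyak--Viro, every GPV finite-type invariant $v$ of degree at most $n$ admits an expression
\[ v(K) = \sum_{A} c_A\, \langle A, G(K)\rangle, \]
where $A$ ranges over signed, oriented arrow diagrams with at most $n$ arrows, $G(K)$ is the Gauss diagram of $K$, and $\langle A, G(K)\rangle$ is the signed count of sub-arrow-diagrams of $G(K)$ matching $A$ (respecting orientations and signs of arrows).

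The next step is to verify that virtualization at a classical crossing $c$ of $K$ induces a very specific change on $G(K)$: the arrow representing $c$ has both its direction reversed and its sign negated, while every other arrow is unchanged. This is a short local calculation, since virtual crossings do not appear in the Gauss diagram and the swap of over- and under-strands at the remaining classical crossing simultaneously flips the arrow direction and the local writhe sign. Consequently, the virtualization-invariance of $v$ becomes the statement that the linear combination $\sum_A c_A A$ is unchanged by the operation of reversing-and-negating any single arrow in any realized Gauss diagram.

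Assuming $v$ is nonconstant, I would then choose an arrow diagram $A_0$ of minimal positive degree $k$ with $c_{A_0}\ne 0$. Using the realizability of arrow diagrams on a line, I would realize $A_0$ as the Gauss diagram of a long virtual knot $K$. For any arrow $a$ of $A_0$, let $K'$ be obtained by virtualizing the corresponding crossing, so that $G(K') = A_0^a$, the modification of $A_0$ by reversing-and-negating $a$. The minimality of $k$ ensures that every proper sub-arrow-diagram of $A_0$ has zero coefficient, so the Gauss diagram formula reduces to
\[ v(K) - v(K') \;=\; c_{A_0} - c_{A_0^a}. \]
If some arrow $a$ makes this nonzero, the desired pair $(K,K')$ has been produced, and the closed-knot case follows by embedding the long-knot construction in a sufficiently simple closure.

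The main obstacle I anticipate is the degenerate case in which $c_{A_0} = c_{A_0^a}$ for every arrow $a$, which forces the coefficient to be constant on the entire $(\mathbb{Z}/2)^k$-orbit of $A_0$ under independent arrow-reversals. To rule this out, I would combine the hypothesized virtualization-invariance with the Polyak--Viro Reidemeister relations among the coefficients $\{c_A\}$ that any GPV invariant must satisfy, and re-run the virtualization comparison on an enriched realization in which $A_0$ is augmented by isolated companion arrows designed to break the remaining symmetry. Showing that no nontrivial assignment of coefficients is simultaneously compatible with full arrow-reversal invariance and the Polyak--Viro relations is the technical heart of the argument; I would attempt it by induction on $k$, borrowing the twist-sequence technique of \cite{myfirst} to handle the polynomial growth of $v$ along a twist family should the direct combinatorial argument be insufficient.
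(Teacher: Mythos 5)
Your proposal shares the paper's starting point — translate virtualization-invariance of a GPV invariant into a constraint on Gauss diagram formula coefficients and then derive a contradiction — but it contains a factual error and then stops short at exactly the place the paper does real work.

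First, the factual error: you assert that virtualizing a crossing ``reverses the arrow direction \emph{and} negates its sign.'' That is the effect of a classical crossing change, not of the virtualization move. Virtualization re-routes the strands through two virtual crossings (which leave no trace in the Gauss diagram) and only flips over/under at the classical crossing that remains, so the net effect on the Gauss diagram is to reverse the arrow's direction while \emph{preserving} its sign $\varepsilon$; the paper states this explicitly right after Figure~1. Your subsequent combinatorics (orbits under ``reverse-and-negate'') are therefore built on the wrong symmetry.

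Second, even after correcting the symmetry to pure arrow reversal, your argument does not close. You correctly note that virtualization-invariance should force invariance of the coefficients $c_A$ under arrow reversal, and this is nontrivial: the paper isolates it as Lemma~\ref{pok}, proved by induction on the number of arrows using the explicit form of $I_{\mathrm{GPV}}$ and $I_{\mathrm{GPV}}^{-1}$. Your plan to realize a minimal-degree diagram $A_0$ and compare $v(K)$ with $v(K')$ is in the spirit of that lemma, but you openly concede that the ``degenerate case'' $c_{A_0}=c_{A_0^a}$ for all $a$ is ``the technical heart'' and offer only a sketch — augmenting by companion arrows, invoking the Polyak--Viro relations, and falling back on twist sequences. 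None of that is carried out, and it is exactly where the paper's argument differs: arrow-reversal invariance lets one forget directions entirely and work in the signed chord-diagram quotient $\mathscr{V}_n$, and Polyak's average map $\mu_n$ then shows that each graded piece $\overline{\mathscr{F}}_n/\langle\overline{\text{1T}}_n,\overline{\text{6T}}_n\rangle\otimes\mathbb{Q}$ satisfies the two-term relation, which (together with 1T) forces it to vanish identically. This vanishing, combined with the low-order base cases of Lemma~\ref{lowo}, is what kills all nonconstant invariants. As written, your proposal identifies the right obstacle but does not supply the mechanism — the reduction to chord diagrams and the two-term collapse — that actually overcomes it.
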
 

The proof of this theorem is surprisingly elementary.  It uses only a few basic facts about the Polyak algebra, the algebra of arrow diagrams, and module theory. We present here a proof with all details laid bare.

There is a well-known conjecture about the virtualization move called the Virtualization Conjecture \cite{MR2191949}.  It states that if $K_1$ and $K_2$ are classical knots which are obtained from one another by a sequence of virtualization moves and generalized Reidemeister moves, then $K_1$ and $K_2$ are classically isotopic.  The fact that there are many invariants which are unchanged by the virtualization move is positive evidence for this conjecture. 
In light of the conjecture, Theorem \ref{bigthm} is quite curious indeed.

This paper is organized as follows.    In the remainder of Section 1, we review the construction of the GPV finite type invariants.  The goal of Section 2 is to prove Theorem \ref{bigthm}.

The author would like to express his deep gratitude to Vassily Manturov who encouraged this investigation and patiently responded to the author's numerous (and characteristically) bad ideas.  The author would also like to thank the Monmouth University Mathematics Department for their generous financial support of this research.
\subsection{Review of GPV Finite-Type Invariants}  In this section, we review the two notions of finite-type invariants for virtual knots. First recall how one obtains a Gauss diagram $D$ of an oriented virtual knot diagram $K$. Traverse the circle in specified direction.  Every time one arrives at a classical crossing, mark a corresponding point on a copy of $S^1$ (called the Wilson loop).  If two marked points on $S^1$ correspond to the same classical crossing, connect them with an arrow.  The arrowhead is incident to the arc on $S^1$ which corresponds to the underpassing arc on $K$.  Moreover, we attach a sign to each arrow which gives the orientation of the crossing:
\[
\begin{array}{cc}
\scalebox{.15}{\psfig{figure=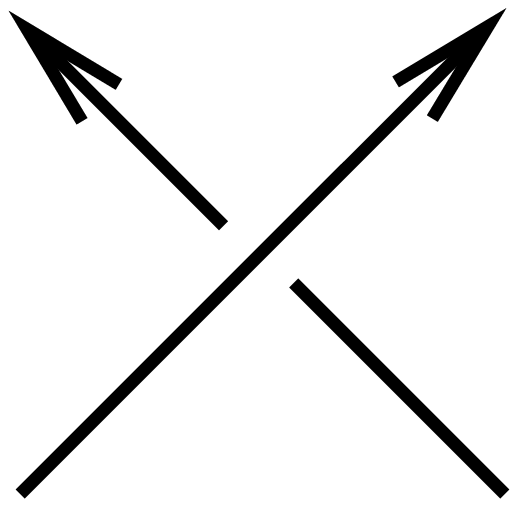}} & \begin{array}{c} \scalebox{.15}{\psfig{figure=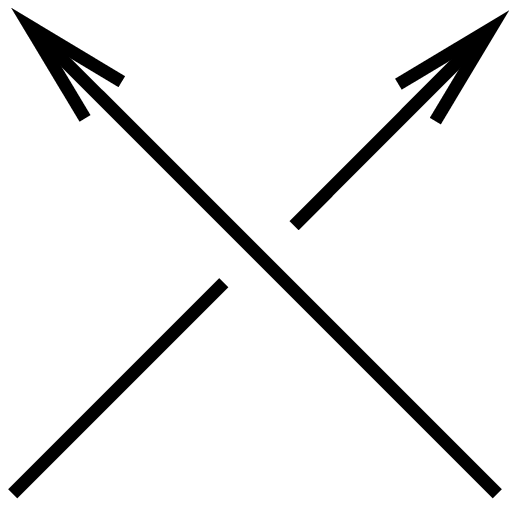}}\end{array} \\
\text{Arrow carries sign: }\oplus & \text{Arrow carries sign: } \ominus
\end{array}
\]
It is known that if two knots have the same Gauss diagram, then they are virtually isotopic via a sequence of virtual moves (see \cite{MR1763963}). The same construction works just as well for long virtual knots.  Instead of the Wilson loop, we use the Wilson line.  It is just a copy of $\mathbb{R}$.         

The virtualization move is given in Figure \ref{virtmove}.  Notice that for any of the ways in which the arcs might be directed, the local crossing number remains the same.  However, the crossing changes from over to under or vice versa.  The affect on the Gauss diagram is easy to describe:
\[
\varepsilon \begin{array}{c} \scalebox{.15}{\psfig{figure=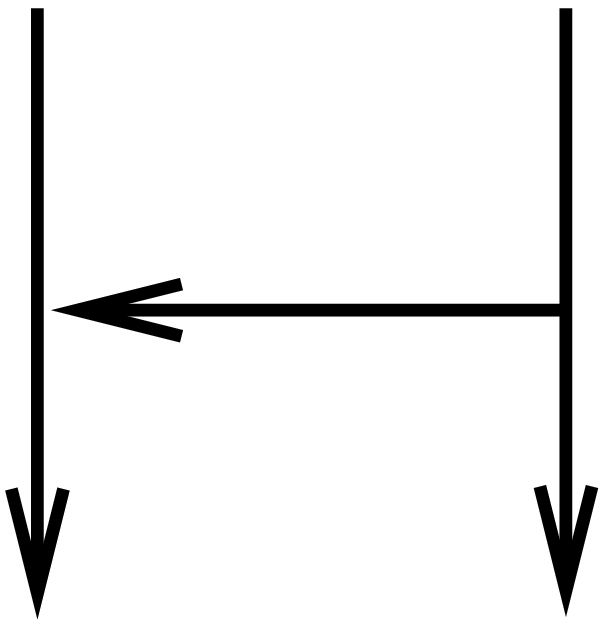}} \end{array} \leftrightarrow \varepsilon \begin{array}{c} \scalebox{.15}{\psfig{figure=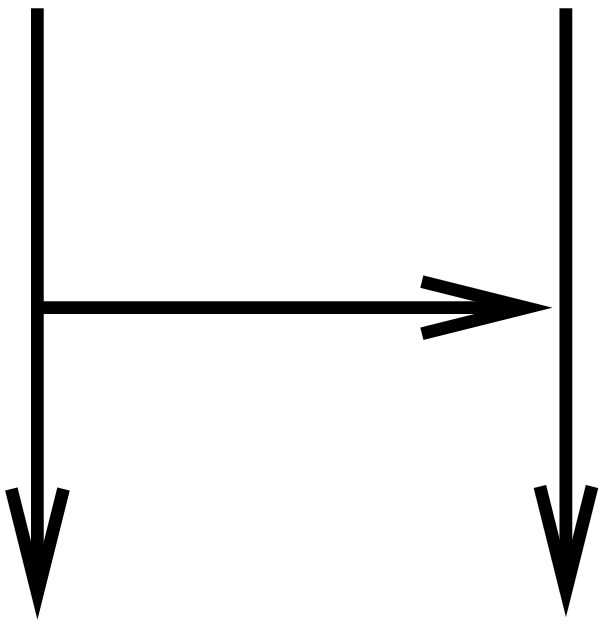}} \end{array}
\]
We now proceed to the defintion of Kauffman finite-type invariants.  First, the class of virtual knots is extended to the class of four valent graphs modulo rigid vertex isotopy (see \cite{virtkauff} for precise definition).  Vertices take the place of the singular crossings that appear in the Vassiliev theory of classical knots.  Any virtual knot invariant can be extended to these graphs by applying the following relation:
\[
v\left(\begin{array}{c} \scalebox{.15}{\psfig{figure=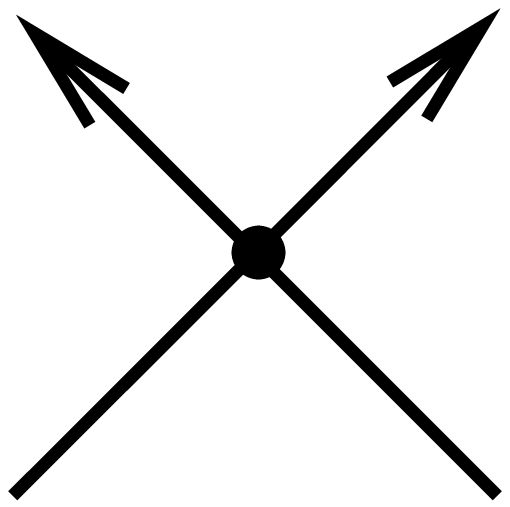}} \end{array} \right)=v\left( \begin{array}{c} \scalebox{.15}{\psfig{figure=orien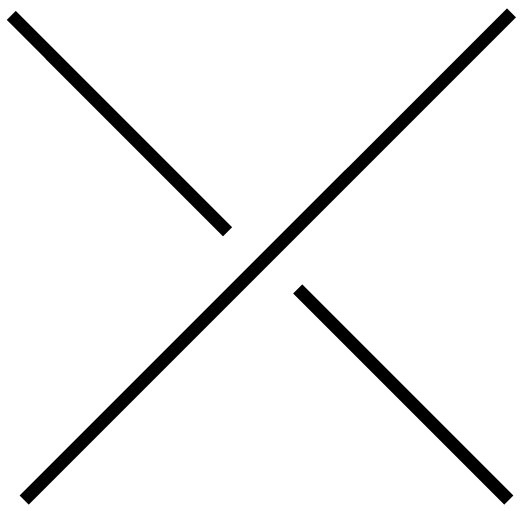}} \end{array}\right)-v\left(\begin{array}{c} \scalebox{.15}{\psfig{figure=orienleftcross.eps}}\end{array}\right)
\]
A knot invariant is said to be of Kauffman finite type $\le n$ if $v(K_{\dagger})=0$ for all graphs $K_{\dagger}$ with greater than $n$ vertices.  The coefficient of $x^n$ in the power series expansion of the Birman substitution (i.e. $A \to e^x$) of the Jones-Kauffman polynomial is known to be of Kauffman finite-type $\le n$ (see \cite{virtkauff}).

The second kind of finite-type invariant of virtual knots is due to Goussarov,Polyak, and Viro \cite{MR1763963}. Virtual knots are extended to knots having \emph{semivirtual} crossings.  Any virtual knot invariant can be extended to this larger class by applying the relation:
\[
v\left(\begin{array}{c} \scalebox{.15}{\psfig{figure=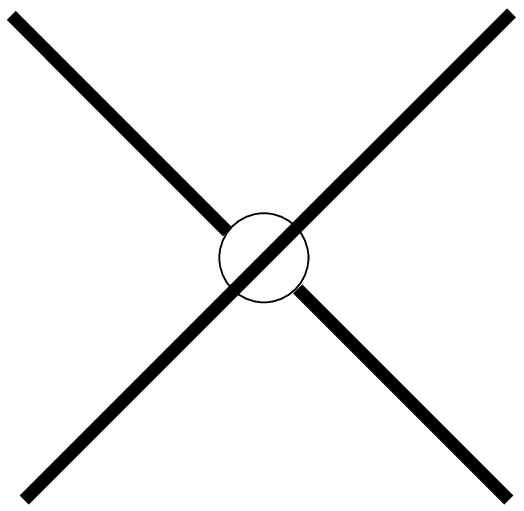}} \end{array} \right)=v\left( \begin{array}{c} \scalebox{.15}{\psfig{figure=rightcross.eps}} \end{array}\right)-v\left(\begin{array}{c} \scalebox{.15}{\psfig{figure=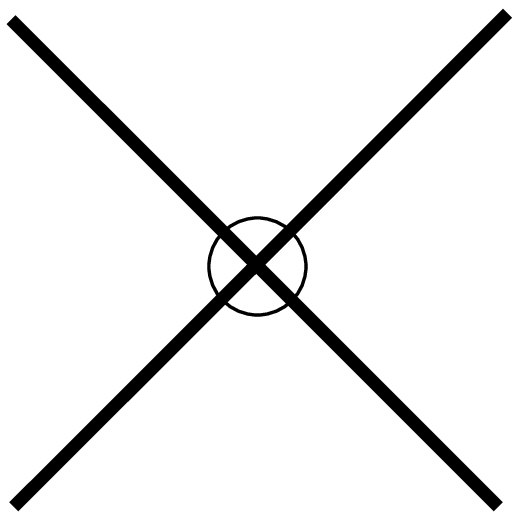}}\end{array}\right)
\]
Semivirtual crossings are represented in Gauss diagrams by dashed arrows.  Schematically, we have:
\[
\begin{array}{c} \scalebox{.17}{\psfig{figure=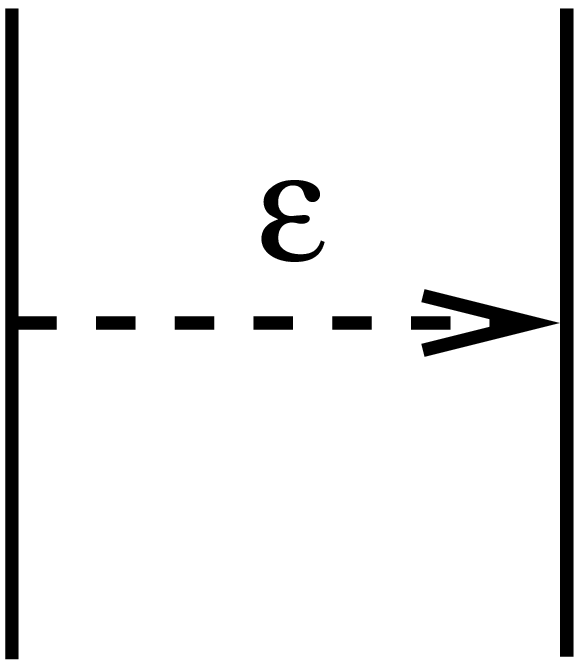}} \\ \end{array} =  \begin{array}{c} \scalebox{.17}{\psfig{figure=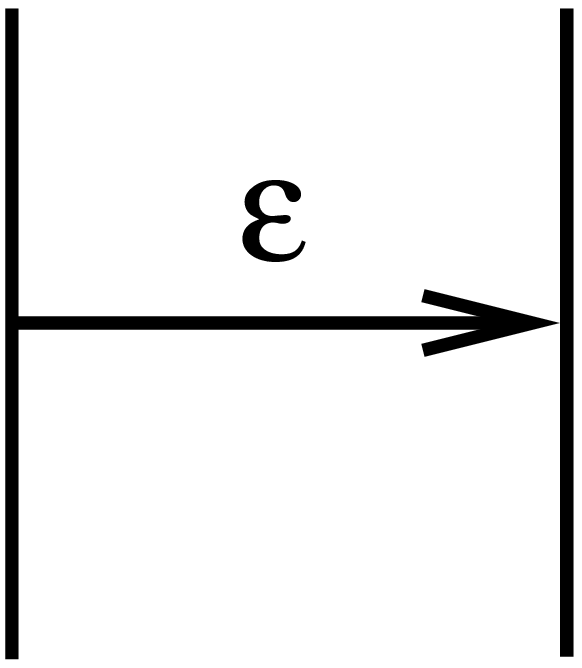}} \\ \end{array}-\begin{array}{c} \scalebox{.17}{\psfig{figure=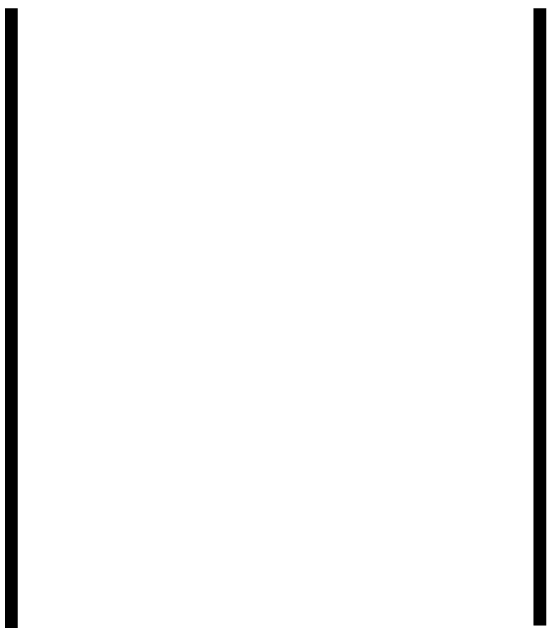}} \\\end{array}
\]

A virtual knot invariant is said to be of GPV finite-type $\le n$ if $v(K_{\circ})=0$ for all diagrams $K_{\circ}$ with greater than $n$ semivirtual crossings. There is a simple and elegant algorithm for constructing all GPV finite-type invariants of order $\le n$ that is bounded only by how much computing power one has readily available .  Moreover, every GPV finite-type invariant of order $\le n$ is of Kauffman finite-type $\le n$.  However, the aforementioned coefficients obtained from the Jones-Kauffman polynomial are not of GPV finite-type $\le n$ for \emph{any} $n$ (see \cite{myfirst}). 

We now recall the construction of the rational GPV finite-type invariants found in \cite{MR1763963}. Let $\mathscr{D}$ denote the set of all Gauss diagrams of virtual knots. A subdiagram of $D \in \mathscr{D}$ is a Gauss diagram consisting of some subset of the edges of $D$. Denote by $\mathscr{A}$ the free abelian group generated by \emph{dashed} Gauss diagrams.  The elements of $A$ are just Gauss diagrams with all arrows drawn dashed.  Define a map $i:\mathbb{Z}[\mathscr{D}] \to \mathscr{A}$ to be the map which makes all the arrows of a Gauss diagram dashed.  Define $I_{\text{GPV}}:\mathbb{Z}[\mathscr{D}] \to \mathscr{A}$ by:
\[
I_{\text{GPV}}(D)=\sum_{D' \subset D} i(D')
\]
where the sum is over all subdiagrams of $D$.  The Polyak algebra is the quotient of $\mathscr{A}$ by the submodule $\Delta\mathscr{P}=\left<\Delta \text{PI},\Delta \text{PII},\Delta \text{PIII} \right>$ generated by the relations in Figure \ref{polyakrels}.  The Polyak algebra is denoted $\mathscr{P}$.
\begin{figure}[h]  
\[
\underline{\Delta \text{PI}:}\,\,\, \begin{array}{c}\scalebox{.15}{\psfig{figure=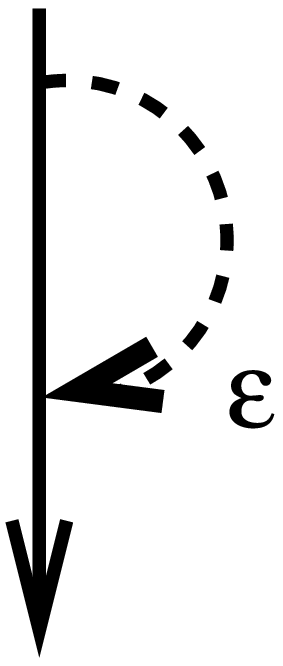}} \end{array} =0, \underline{\Delta \text{PII}:}\,\,\,\begin{array}{c}\scalebox{.15}{\psfig{figure=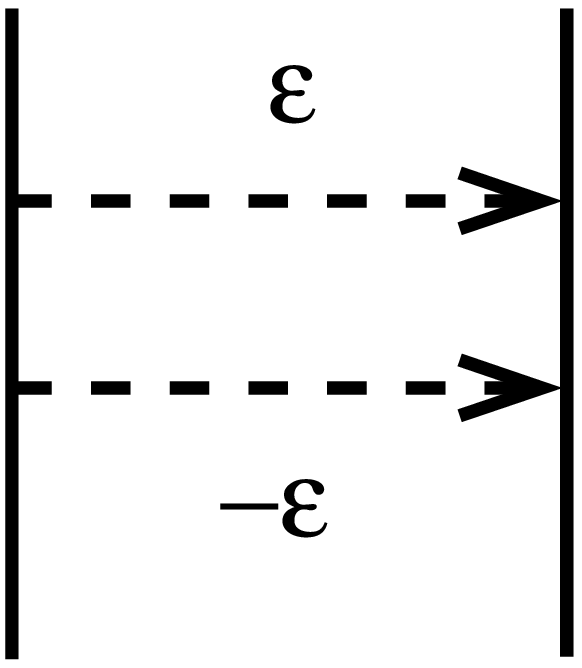}} \end{array}+\begin{array}{c}\scalebox{.15}{\psfig{figure=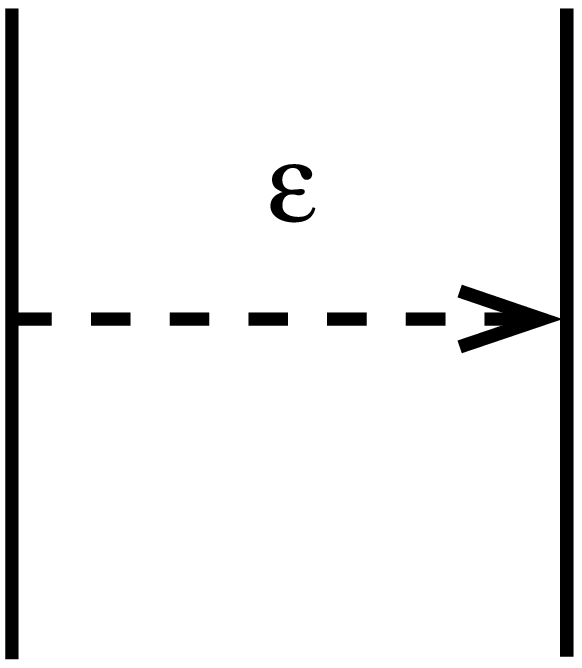}} \end{array}+\begin{array}{c}\scalebox{.15}{\psfig{figure=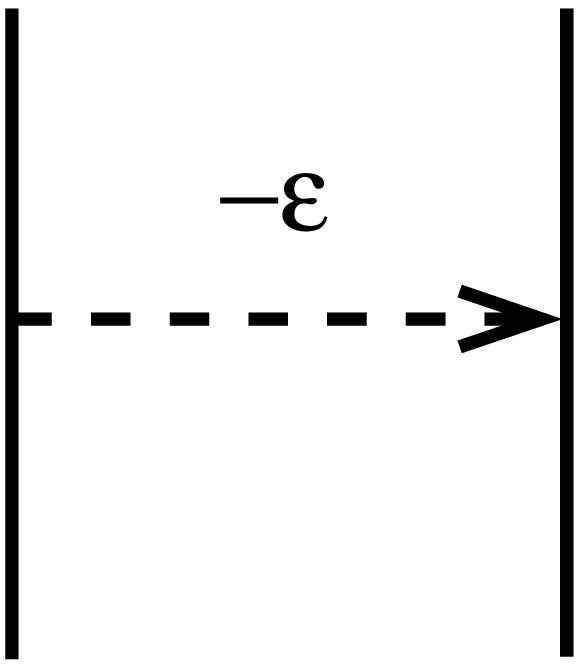}} \end{array}=0,
\]
\begin{eqnarray*}
\underline{\Delta \text{PIII}:}\,\,\,\begin{array}{c}\scalebox{.15}{\psfig{figure=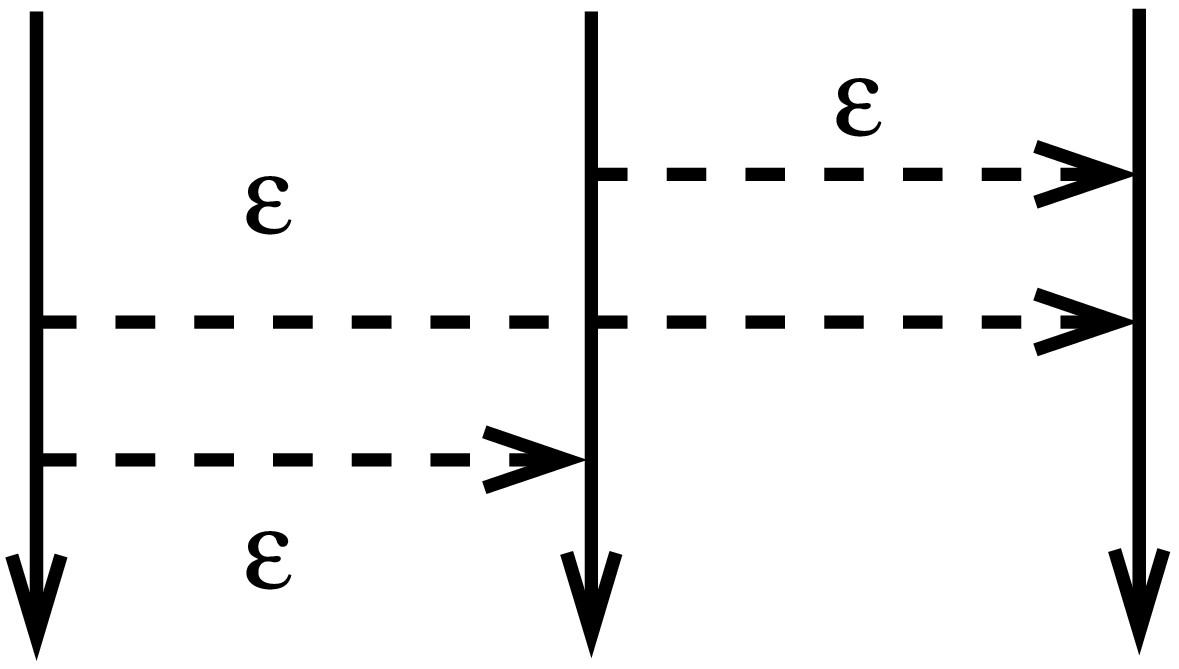}} \end{array}+\begin{array}{c}\scalebox{.15}{\psfig{figure=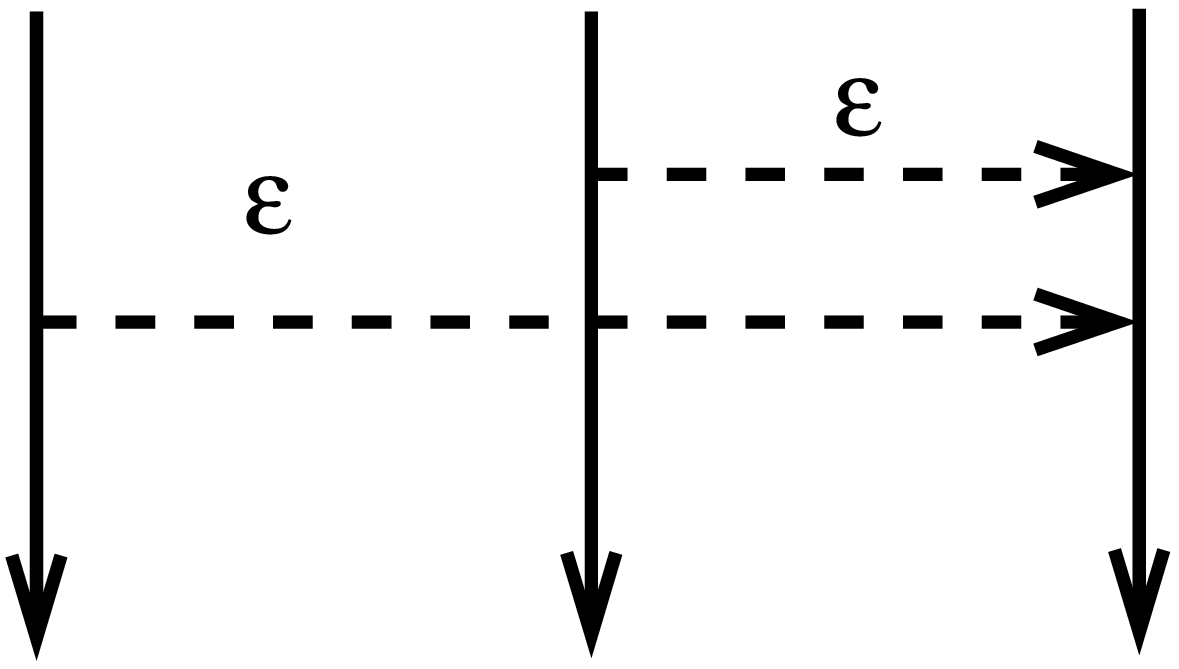}} \end{array}+\begin{array}{c}\scalebox{.15}{\psfig{figure=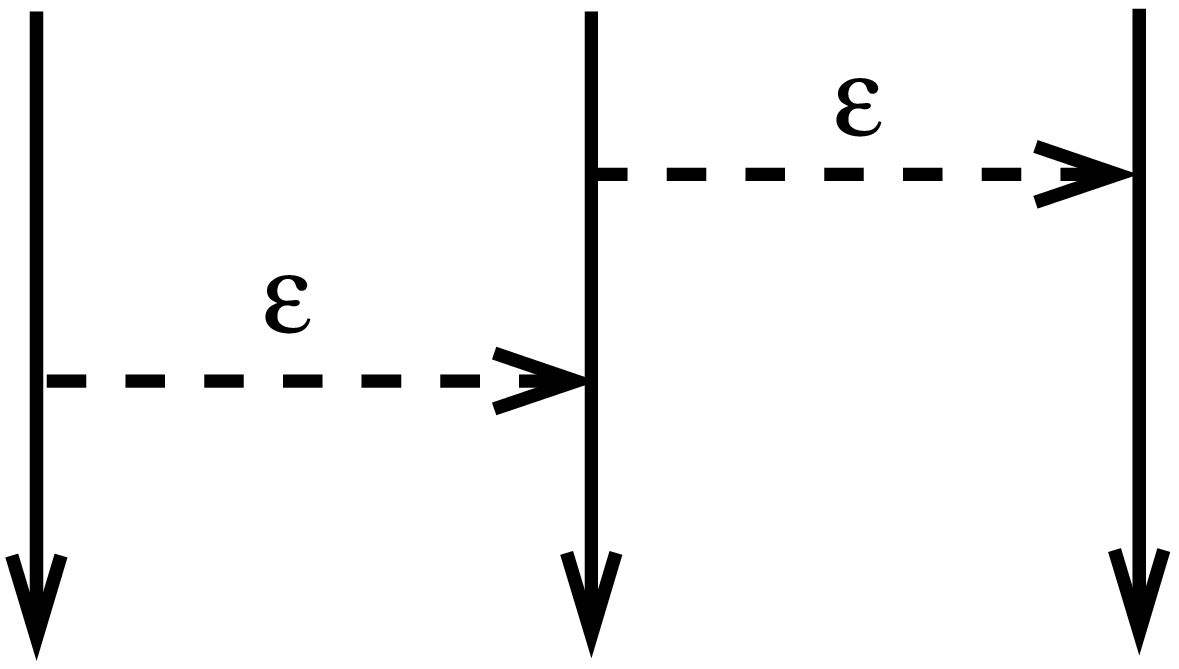}} \end{array}+\begin{array}{c}\scalebox{.15}{\psfig{figure=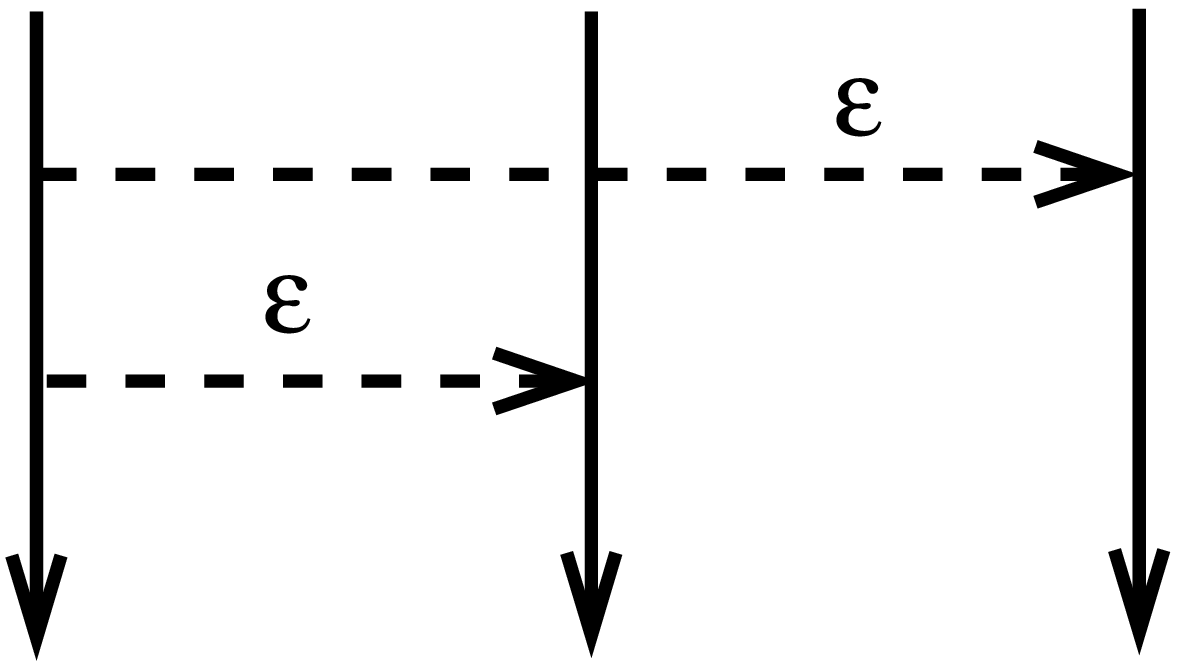}} \end{array} &=& \\ \begin{array}{c}\scalebox{.15}{\psfig{figure=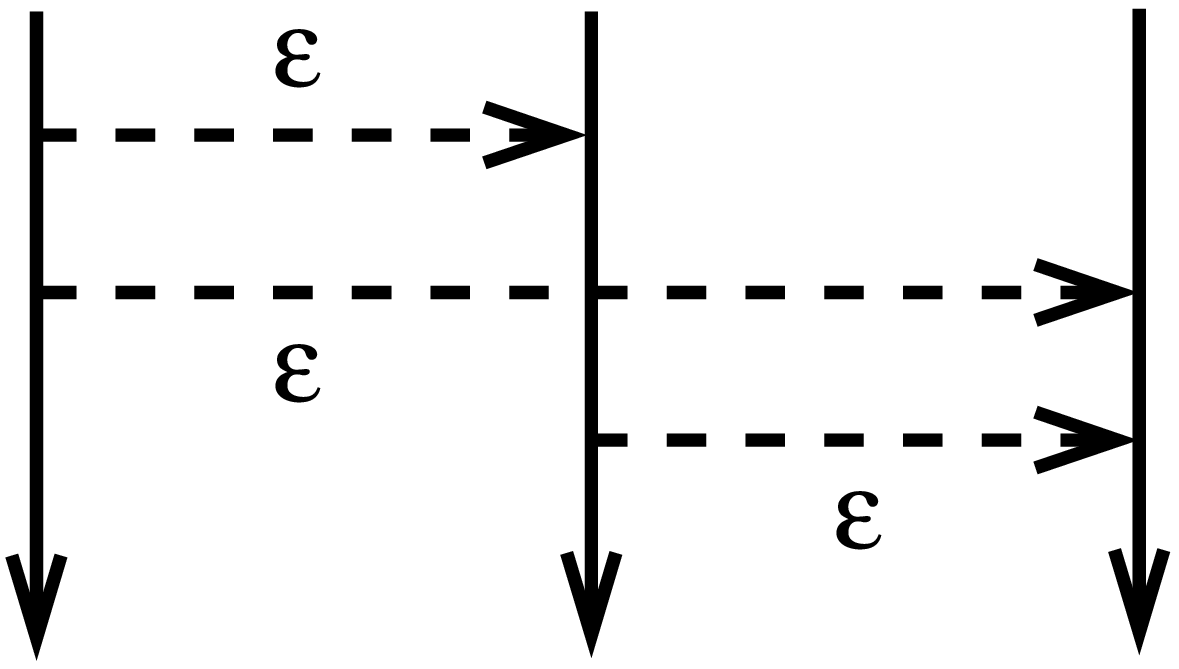}} \end{array}+\begin{array}{c}\scalebox{.15}{\psfig{figure=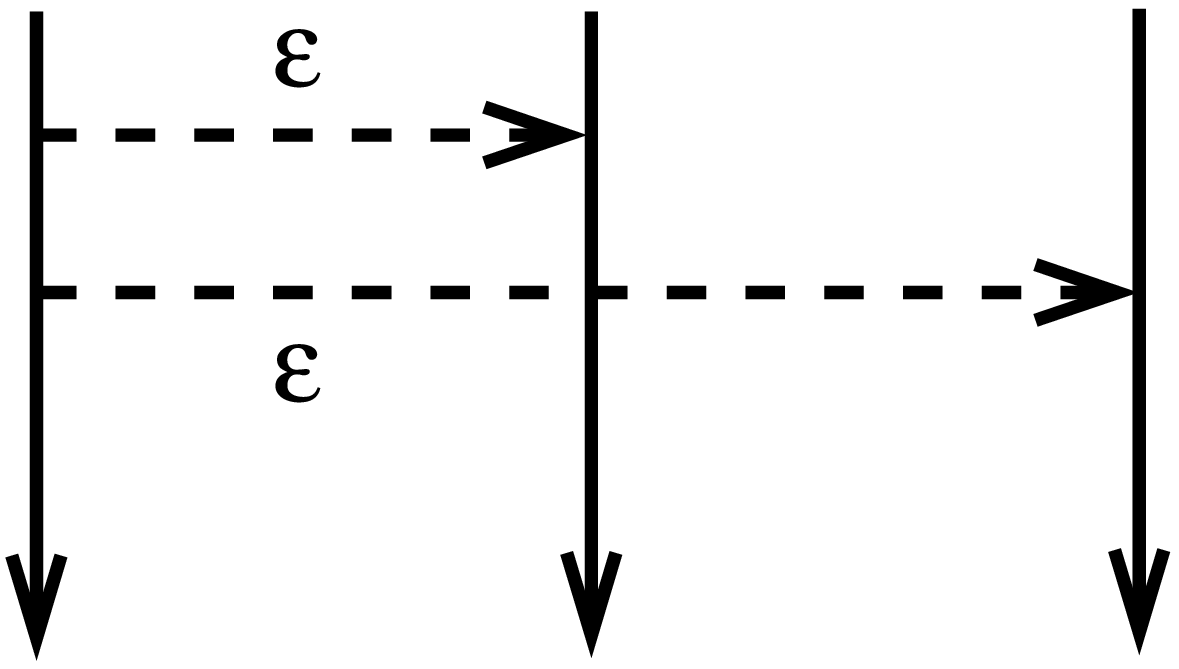}} \end{array}+\begin{array}{c}\scalebox{.15}{\psfig{figure=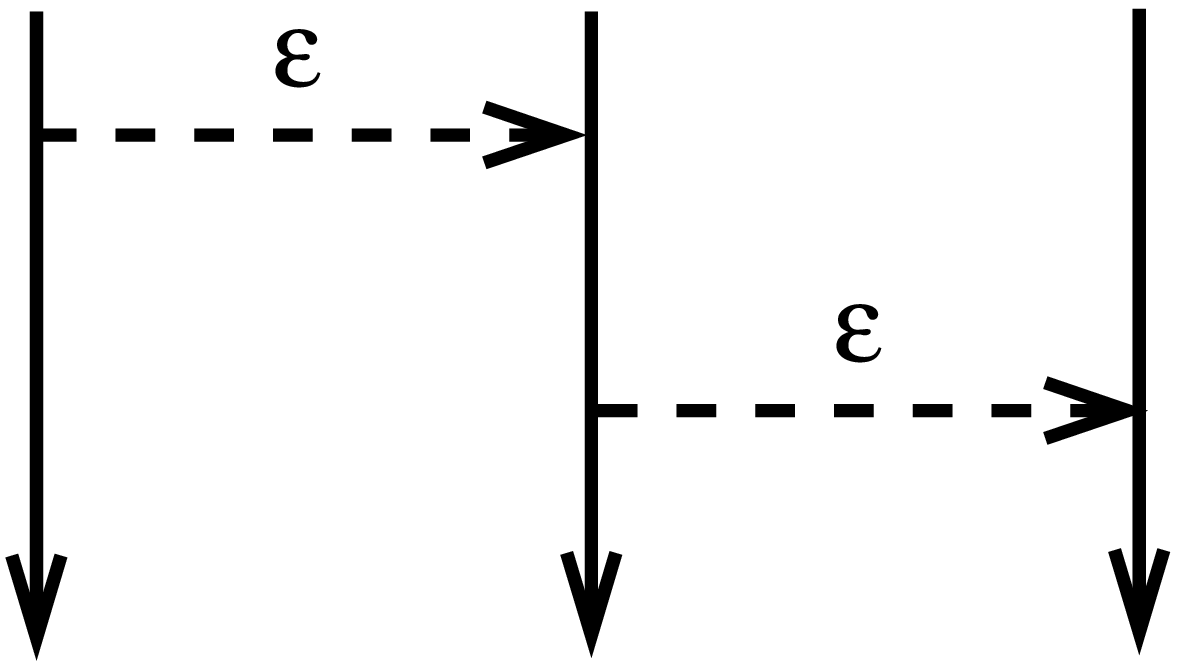}} \end{array}+\begin{array}{c}\scalebox{.15}{\psfig{figure=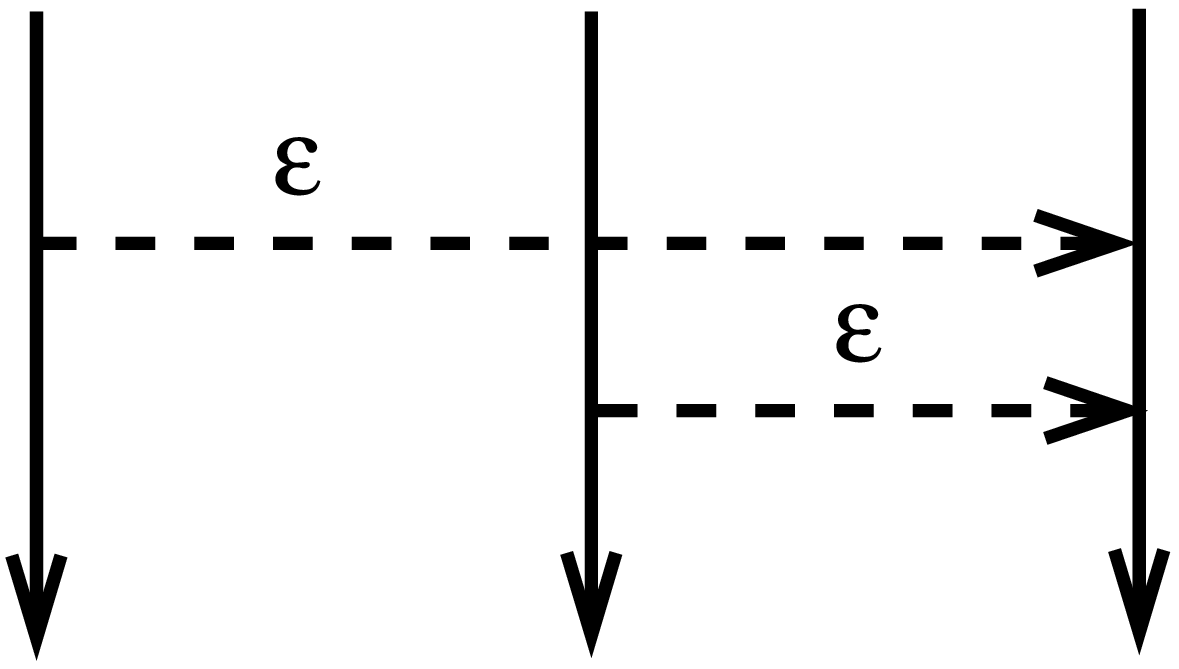}} \end{array} & & \\
\end{eqnarray*}
\caption{Polyak Relations} \label{polyakrels}
\end{figure}

Denote by $\mathscr{K}$ the set of virtual knots (where the elements are virtual isotopy classes of knots).  The following theorem shows that the theory of virtual knots is entirely encoded in the Polyak algebra.
\begin{theorem}[Goussarov, Polyak, Viro, \cite{MR1763963}] \label{iisom} The map $I_{\text{GPV}}:\mathbb{Z}[\mathscr{D}] \to \mathscr{A}$ is an isomorphism.  The inverse can be defined explicitly:
\[
I_{\text{GPV}}^{-1}(A)=\sum_{A'\subset A} (-1)^{|A-A'|} i^{-1}(A)
\]
Here, $|A- A'|$ means the number of arrows in $A$ that are not in $A'$.  Furthermore, if $D \in \mathbb{Z}[\mathscr{D}]$ has dashed arrows, then every element in the sum defining $I(D)$ also has every dashed arrow of $D$.  Finally, the map extends to an isomorphism of the quotient algebras $I_{\text{GPV}}:\mathbb{Z}[\mathscr{K}] \to \mathscr{P}$.
\end{theorem}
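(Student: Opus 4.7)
\emph{Outline.} The theorem has two distinct claims: (i) $I_{\text{GPV}}$ is a bijection between the free abelian groups $\mathbb{Z}[\mathscr{D}]$ and $\mathscr{A}$ with the stated explicit inverse (the displayed $i^{-1}(A)$ appears to be a typo for $i^{-1}(A')$), together with the preservation-of-dashes property; and (ii) this bijection descends to an isomorphism $\mathbb{Z}[\mathscr{K}]\to\mathscr{P}$ of the quotients. Claim (i) is a formal exercise in M\"obius inversion on a Boolean lattice, whereas (ii) is the substantive claim.

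\emph{Step 1: bijectivity at the free level.} Filter both $\mathbb{Z}[\mathscr{D}]$ and $\mathscr{A}$ by the number of arrows. For a Gauss diagram $D$ with $n$ arrows one has $I_{\text{GPV}}(D)=i(D)+(\text{terms strictly below filtration level } n)$, so $I_{\text{GPV}}$ is ``upper-triangular with ones on the diagonal'' with respect to this filtration, hence a bijection. To verify the explicit inverse, define $J(A):=\sum_{A'\subset A}(-1)^{|A\setminus A'|}\,i^{-1}(A')$ and compute
\[
J\bigl(I_{\text{GPV}}(D)\bigr)=\sum_{D''\subset D}\Biggl(\sum_{D''\subset D'\subset D}(-1)^{|D'\setminus D''|}\Biggr)D''.
\]
The bracketed sum runs over the Boolean interval $[D'',D]$ of arrow-subsets and equals $(1-1)^{|D\setminus D''|}$, which vanishes unless $D''=D$, in which case it is $1$. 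So $J\circ I_{\text{GPV}}=\mathrm{id}$, and by a symmetric calculation $I_{\text{GPV}}\circ J=\mathrm{id}$. The preservation-of-dashes claim is immediate from the definition: if some arrow of $D$ is already dashed, the notion of ``subdiagram'' must include it, so every summand of $I_{\text{GPV}}(D)$ contains that arrow dashed.

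\emph{Step 2: passage to the quotient.} I would argue that $I_{\text{GPV}}$ carries the Reidemeister ideal onto $\Delta\mathscr{P}$ exactly. The governing principle is local: if $D$ and $\tilde D$ agree outside a small disk and differ inside by a Reidemeister move involving a small set $S$ of arrows, then in $I_{\text{GPV}}(D)-I_{\text{GPV}}(\tilde D)$ all subdiagrams containing no arrow of $S$ pair up identically and cancel; the surviving contributions factor as a ``local'' combination of dashed pictures on $S$ times an arbitrary dashed subdiagram drawn from the rest of $D$. Identifying this local combination in each case shows that R1, R2, R3 map respectively into $\Delta\text{PI}$, $\Delta\text{PII}$, $\Delta\text{PIII}$ (possibly with extra dashed arrows from the ambient diagram attached), giving $I_{\text{GPV}}(\text{R-ideal})\subseteq\Delta\mathscr{P}$.

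The main obstacle is the reverse inclusion, which is chiefly a book-keeping problem: one must show that each of the eight diagrams appearing in $\Delta\text{PIII}$ is realized, modulo $\Delta\text{PI}$ and $\Delta\text{PII}$, as the image under $I_{\text{GPV}}$ of a suitable R3 difference, for every sign-and-orientation variant of R3. The analogous (but easier) check must be made for $\Delta\text{PI}$ and $\Delta\text{PII}$. Once both inclusions are in hand, the isomorphism of Step 1 descends to
\[
\mathbb{Z}[\mathscr{K}]=\mathbb{Z}[\mathscr{D}]/\langle\text{R-moves}\rangle\;\xrightarrow{\sim}\;\mathscr{A}/\Delta\mathscr{P}=\mathscr{P},
\]
completing the proof.
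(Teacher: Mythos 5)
The paper does not give a proof of this theorem at all: it is attributed to Goussarov--Polyak--Viro and cited from \cite{MR1763963}, so there is no in-house argument to compare your proposal against. Judged on its own merits, your Step 1 is complete and correct. The upper-triangular observation plus the M\"obius inversion over the Boolean lattice of arrow subsets is exactly the right argument, the computation $\sum_{D''\subset D'\subset D}(-1)^{|D'\setminus D''|}=(1-1)^{|D\setminus D''|}$ is clean, and you are right that $i^{-1}(A)$ in the displayed formula must be a typo for $i^{-1}(A')$. The preservation-of-dashes remark is also correctly deduced from the fact that subdiagrams are taken only over the solid arrows.

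Step 2, however, is an outline rather than a proof, and you say so yourself (``chiefly a book-keeping problem''). The local-cancellation principle you describe is the right idea: subdiagrams disjoint from the support of the move cancel in the difference, and what remains factors as a local linear combination on $S$ tensored with arbitrary background subdiagrams. But the substance of the theorem lies precisely in carrying out that book-keeping, and two points deserve more care than your outline gives them. First, the three moves interact with the subdiagram sum differently: for R1 the two diagrams differ by one arrow, for R2 by two arrows, and for R3 the two sides have the same three arrows but repositioned, so the ``surviving'' terms arise by three distinct mechanisms (and for R3 one must see how exactly four nonempty subsets survive on each side to produce the eight terms of $\Delta\text{PIII}$). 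Second, the reverse inclusion $\Delta\mathscr{P}\subseteq I_{\text{GPV}}(\langle\text{R-moves}\rangle)$ is not merely a mirror-image check; one must verify that every sign/orientation variant of $\Delta\text{PII}$ and $\Delta\text{PIII}$ is hit, and in GPV's treatment this uses the first two relations to normalize before attacking the third. None of this is wrong in your sketch, but as written it is a plan for a proof, not a proof; the verification is exactly what \cite{MR1763963} supplies and what you would need to reproduce to call this complete.
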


Let $A_n$ denote submodule of $\mathscr{A}$ generated by those diagrams having more than $n$ arrows.  Define $\mathscr{P}_n=\mathscr{A}/(A_n + \Delta \mathscr{P})$. Let $\varphi_n:\mathscr{P} \to \mathscr{P}_n$ denote the natural projection onto the quotient.  The following important theorem characterizes all rational valued GPV finite-type invariants.
\begin{theorem} [Goussarov, Polyak, Viro, \cite{MR1763963}] \label{GPVuni} The map $(I_{\text{GPV}})_n:\mathbb{Z}[\mathscr{K}] \to \mathscr{P} \to \mathscr{P}_n$ is universal in the sense that if $G$ is any abelian group, and $v$ is a GPV finite-type invariant of order $\le n$, then there is a map $v':\mathscr{P}_n \to G$ such that the following diagram commutes:
\[
\xymatrix{\mathbb{Z}[\mathscr{K}] \ar[r]^v \ar[d]_{I_{GPV}} & G \\
\mathscr{P} \ar[r]_{\varphi_n} \ar[ur]_{v I^{-1}_{GPV}} & \mathscr{P}_n \ar@{-->}[u]_{v'} \\
}
\]
In particular, the vector space of rational valued invariants of type $\le n$ is finite dimensional and can be identified with $\text{Hom}_{\mathbb{Z}}(\mathscr{P}_n,\mathbb{Q})$.
\end{theorem}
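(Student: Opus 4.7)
The plan is to use the isomorphism of Theorem \ref{iisom} to transfer $v$ to a function on $\mathscr{P}$ and then show that the finite-type hypothesis forces this function to vanish on the image of $A_n$, so that it descends to $\mathscr{P}_n$. To begin, since $I_{\text{GPV}}:\mathbb{Z}[\mathscr{K}]\to\mathscr{P}$ is an isomorphism, set $\tilde{v}:=v\circ I_{\text{GPV}}^{-1}:\mathscr{P}\to G$; this is the diagonal of the diagram and makes the upper triangle commute by construction. Because $\mathscr{P}_n$ is by definition the quotient of $\mathscr{P}$ by the image of $A_n$, the projection $\varphi_n$ has kernel exactly that image, so producing the required $v':\mathscr{P}_n\to G$ amounts to showing $\tilde{v}(A)=0$ for every fully dashed diagram $A\in\mathscr{A}$ with more than $n$ arrows.

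The key step is to identify $\tilde{v}(A)$, for a fully dashed diagram $A$, with the value of the semivirtual extension of $v$ on the Gauss diagram obtained by reading every dashed arrow of $A$ as a semivirtual crossing. Using the explicit inverse formula in Theorem \ref{iisom},
\[
\tilde{v}(A)=\sum_{A'\subset A}(-1)^{|A-A'|}v(i^{-1}(A')).
\]
On the other hand, applying the semivirtual defining relation to each of the $|A|$ dashed arrows in turn resolves the diagram into an alternating sum over subsets $A'\subset A$ of dashed arrows promoted to solid (real) arrows, with the unchosen arrows being deleted (the virtual resolution corresponds to removing the arrow from the Gauss diagram); the overall sign is $(-1)^{|A|-|A'|}$. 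This is exactly the right-hand side above. Verifying this combinatorial identification carefully, while elementary, is the step I expect to absorb most of the attention, since it is the hinge that connects the dashed-diagram algebra $\mathscr{A}$ to the geometric semivirtual calculus.

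By hypothesis $v$ vanishes on every Gauss diagram with more than $n$ semivirtual crossings, so the identification above yields $\tilde{v}(A)=0$ whenever $|A|>n$. Consequently $\tilde{v}$ factors uniquely through $\varphi_n$ as the required $v':\mathscr{P}_n\to G$, completing the universality statement. For the final assertion, note that there are only finitely many Gauss diagrams with at most $n$ arrows (the data of each arrow is a pair of distinct marked points on the Wilson loop together with a sign, up to combinatorial equivalence), so $\mathscr{P}_n$ is a finitely generated abelian group and $\text{Hom}_{\mathbb{Z}}(\mathscr{P}_n,\mathbb{Q})$ is a finite-dimensional $\mathbb{Q}$-vector space. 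The map $(I_{\text{GPV}})_n$ then implements a bijection between rational GPV finite-type invariants of order $\le n$ and elements of this Hom group: universality provides the forward direction, and pullback along $(I_{\text{GPV}})_n$ provides the inverse.
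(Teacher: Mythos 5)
The paper states this result as a quoted theorem of Goussarov--Polyak--Viro and gives no proof of its own, so there is nothing internal to compare against; your reconstruction is correct and is essentially the standard argument from \cite{MR1763963}: transport $v$ to $\tilde{v}=v\circ I_{\text{GPV}}^{-1}$ on $\mathscr{P}$, and identify $\tilde{v}(A)=\sum_{A'\subset A}(-1)^{|A-A'|}v(i^{-1}(A'))$ with the value of the semivirtual extension of $v$ on the all-dashed realization of $A$, which vanishes for $|A|>n$ by hypothesis. The only point worth making explicit in your last paragraph is that the pullback direction of the bijection also needs an argument: for $v'\in\text{Hom}_{\mathbb{Z}}(\mathscr{P}_n,\mathbb{Q})$, the invariant $v'\circ(I_{\text{GPV}})_n$ is of order $\le n$ because, by the property recorded in Theorem \ref{iisom}, every term of $I_{\text{GPV}}(D)$ retains all dashed arrows of $D$, so a diagram with more than $n$ semivirtual crossings is sent into $A_n$ and hence to $0$ in $\mathscr{P}_n$; injectivity of the correspondence then follows from surjectivity of $(I_{\text{GPV}})_n$.
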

\section{Proof of Main Theorem}
\subsection{A model of virtualization invariant knot invariants} Let $\mathscr{C}$ denote the set of \emph{signed chord} diagrams. These are chord diagrams in the usual sense which have the additional structure of a sign at each chord: $\oplus$ or $\ominus$. Let $C_n$ denote the free abelian group generated by set of signed chord diagrams possessing $>n$ chords. The relations for $\mathbb{Z}[\mathscr{C}]$ are as follows:
\begin{figure}[h]  
\[
\underline{\Delta \text{RI}:}\,\,\, \begin{array}{c}\scalebox{.15}{\psfig{figure=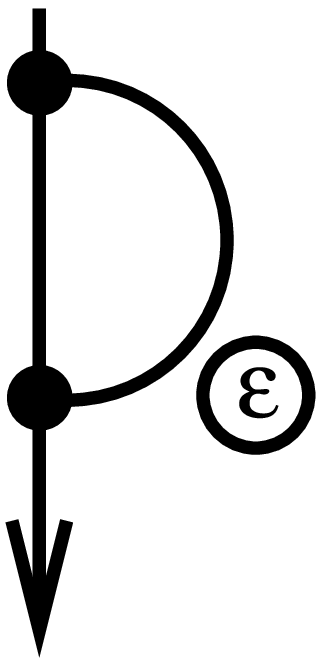}} \end{array} =0, \underline{\Delta \text{RII}:}\,\,\, \begin{array}{c}\scalebox{.15}{\psfig{figure=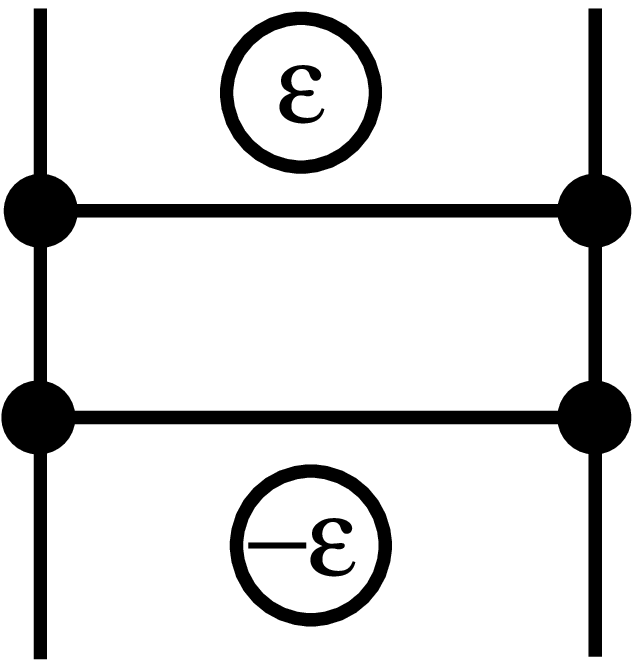}} \end{array}+\begin{array}{c}\scalebox{.15}{\psfig{figure=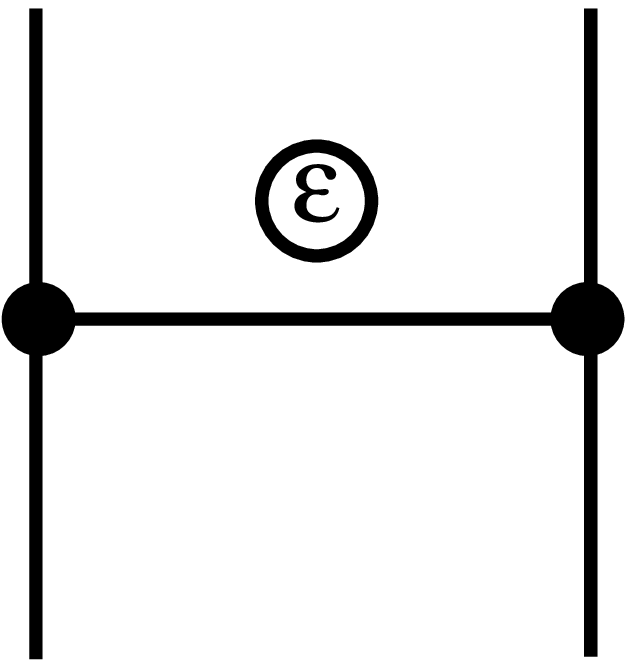}} \end{array}+\begin{array}{c}\scalebox{.15}{\psfig{figure=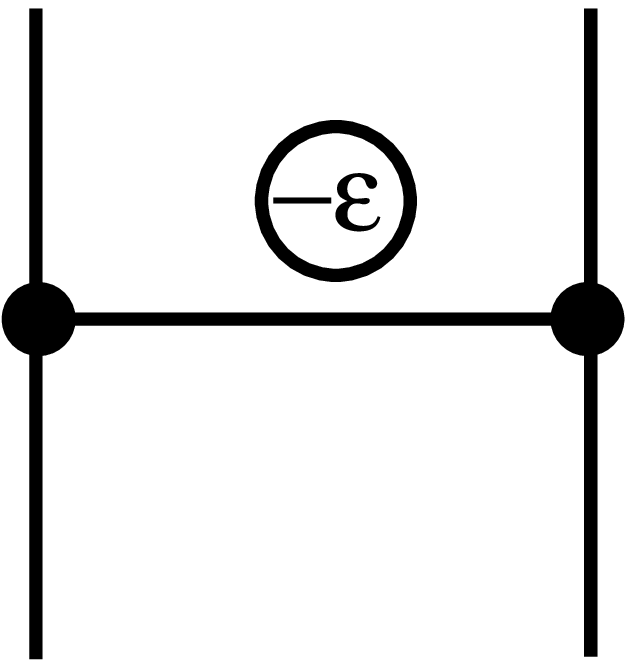}} \end{array}=0,
\]
\begin{eqnarray*}
\underline{\Delta \text{RIII}:}\,\,\,  \begin{array}{c}\scalebox{.15}{\psfig{figure=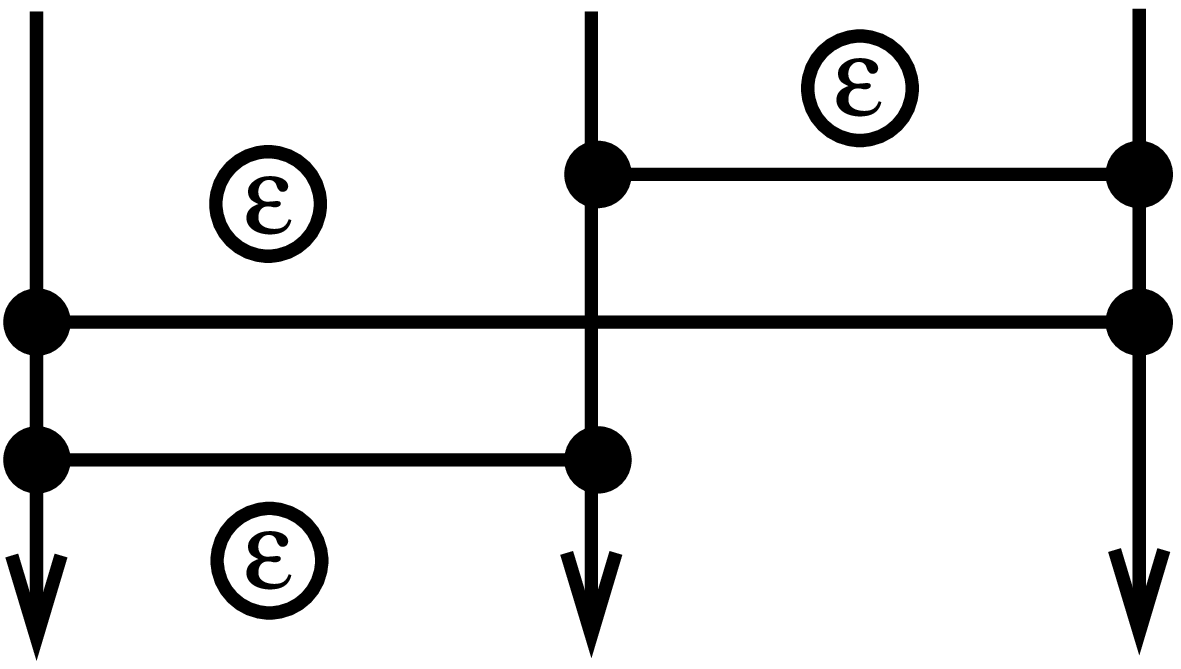}} \end{array}+\begin{array}{c}\scalebox{.15}{\psfig{figure=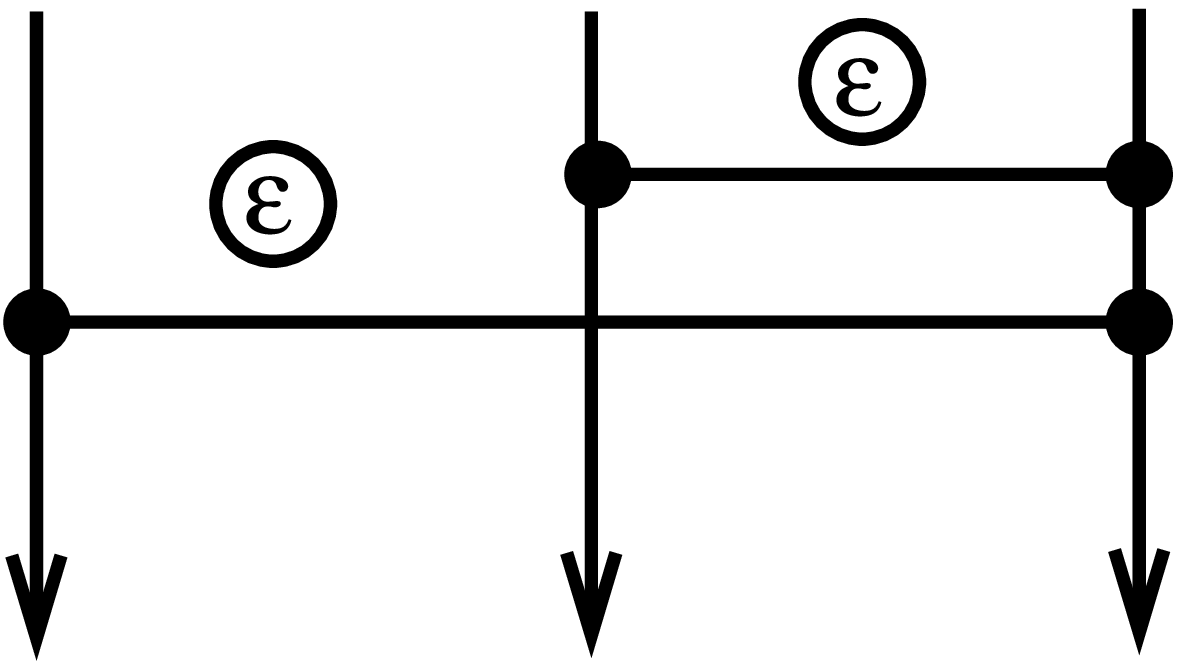}} \end{array}+\begin{array}{c}\scalebox{.15}{\psfig{figure=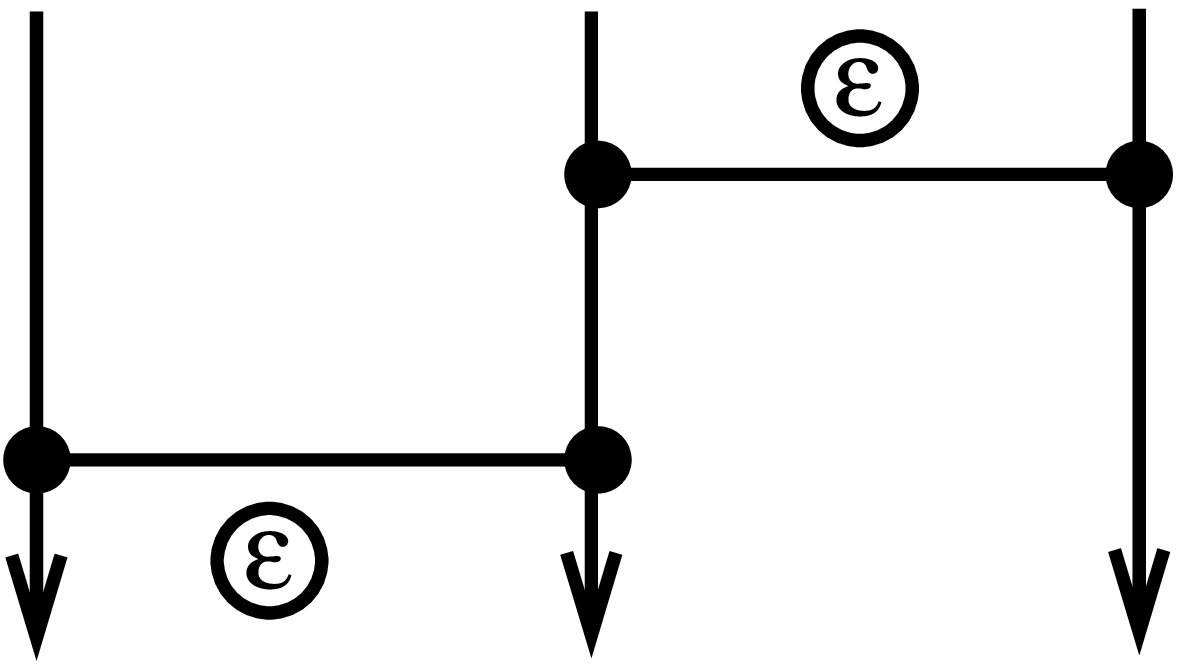}} \end{array}+\begin{array}{c}\scalebox{.15}{\psfig{figure=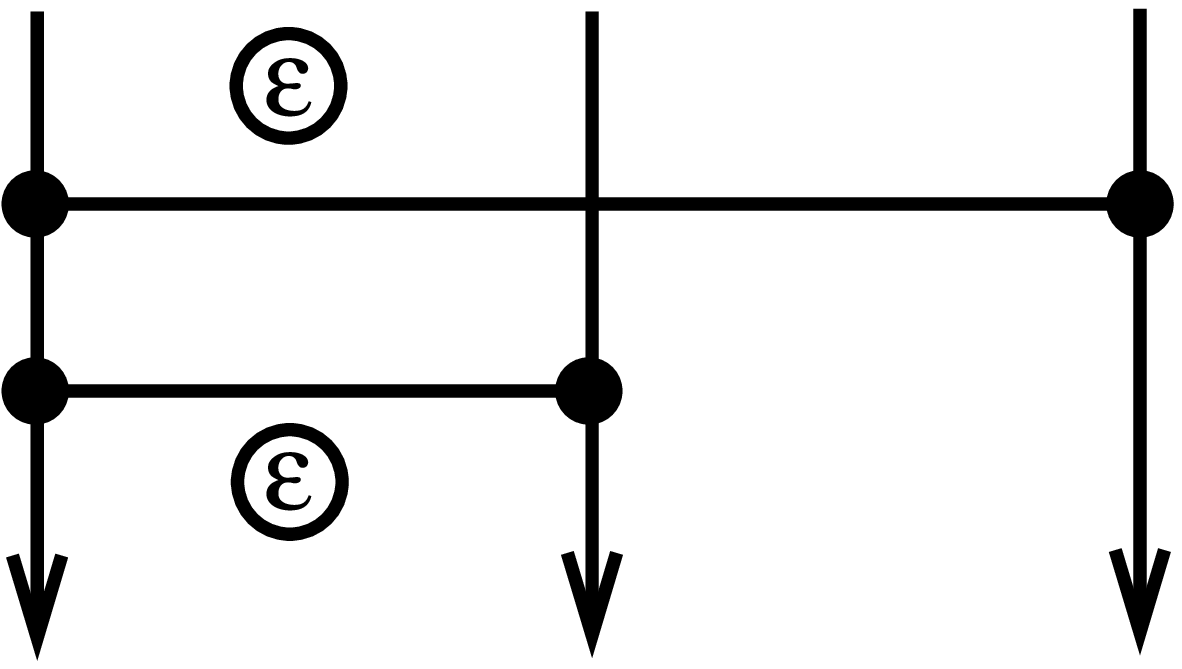}} \end{array} &=& \\ \begin{array}{c}\scalebox{.15}{\psfig{figure=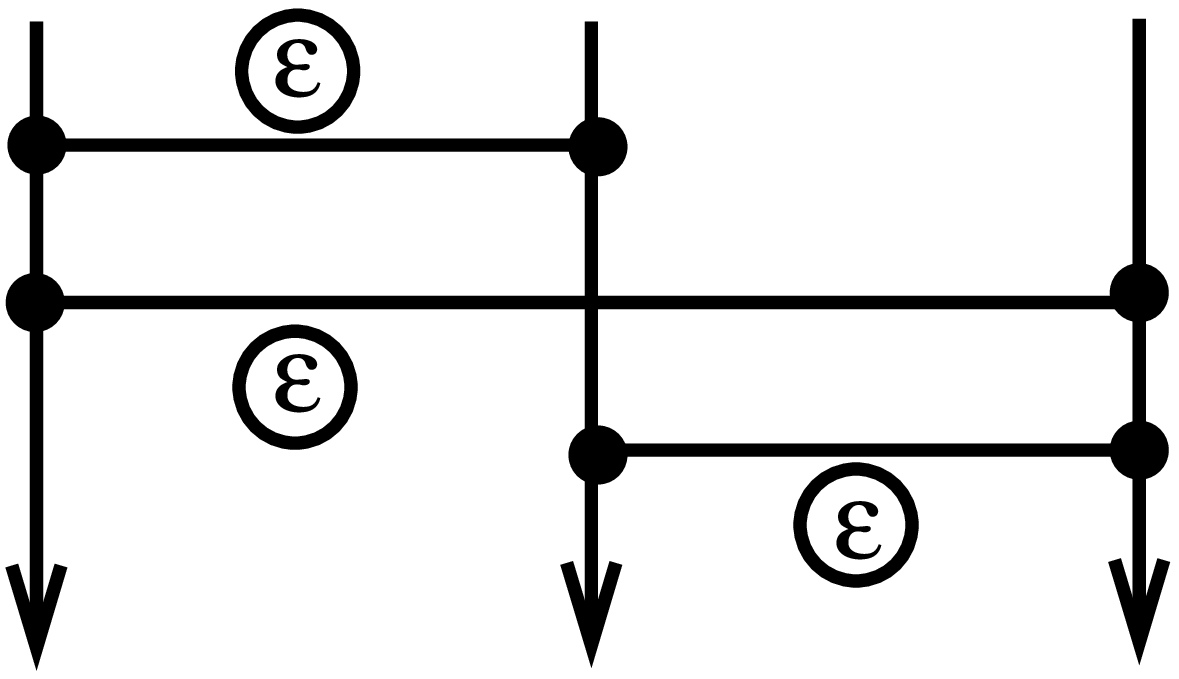}} \end{array}+\begin{array}{c}\scalebox{.15}{\psfig{figure=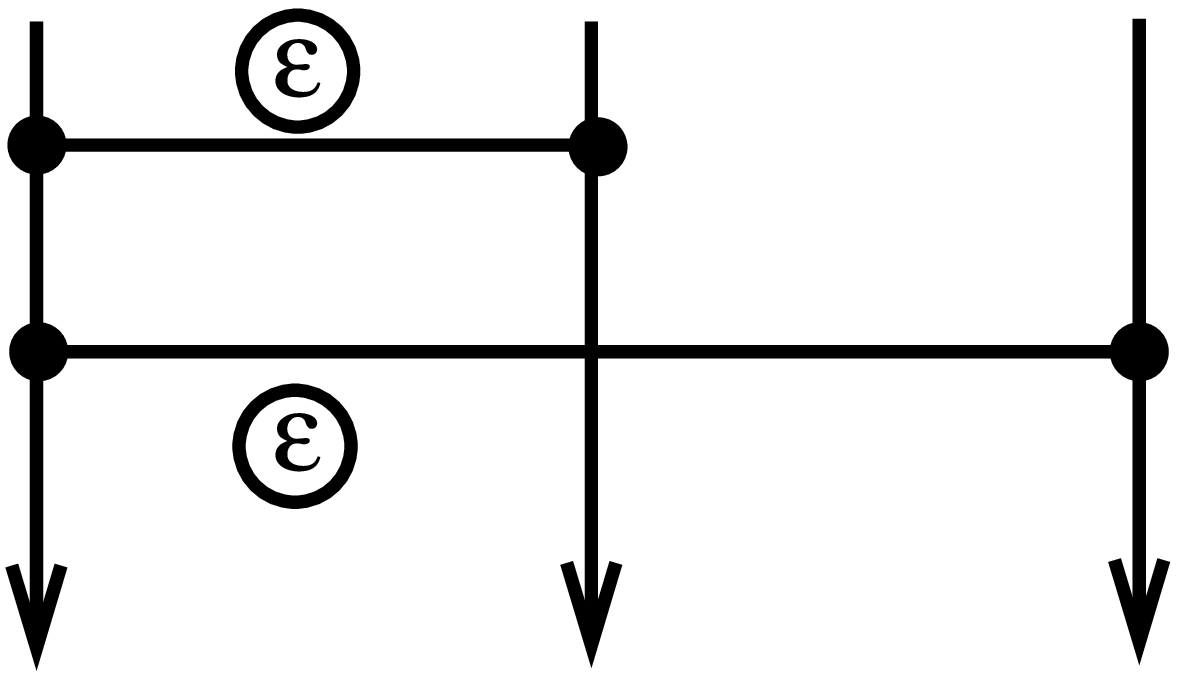}} \end{array}+\begin{array}{c}\scalebox{.15}{\psfig{figure=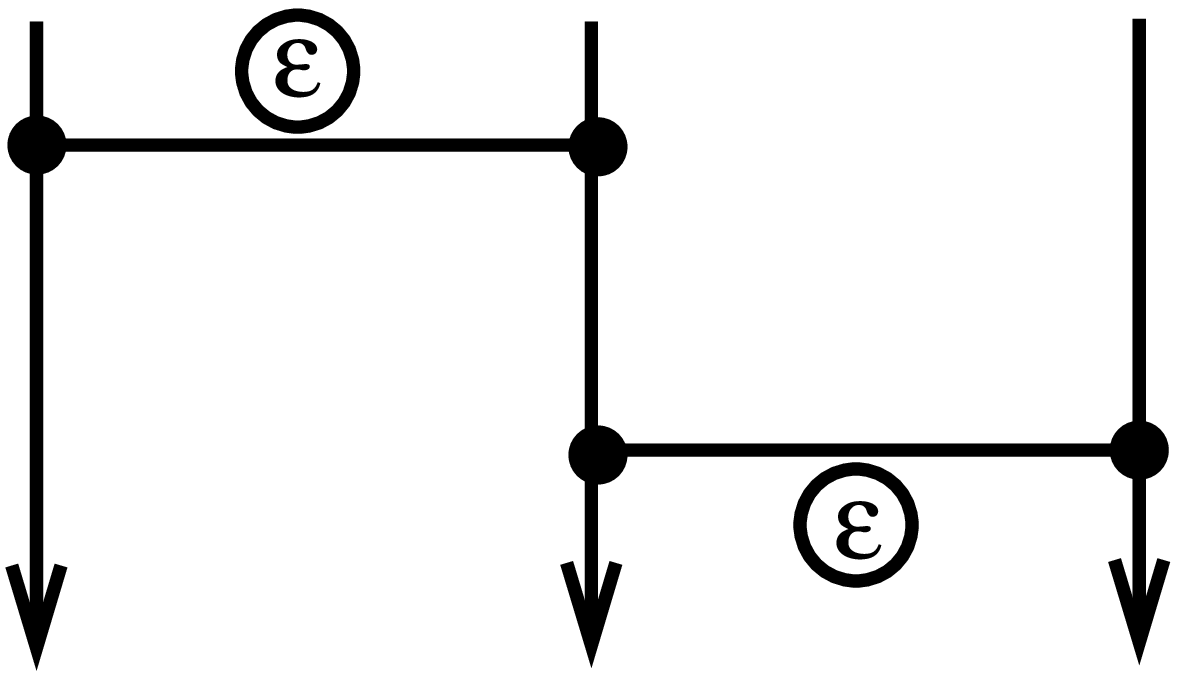}} \end{array}+\begin{array}{c}\scalebox{.15}{\psfig{figure=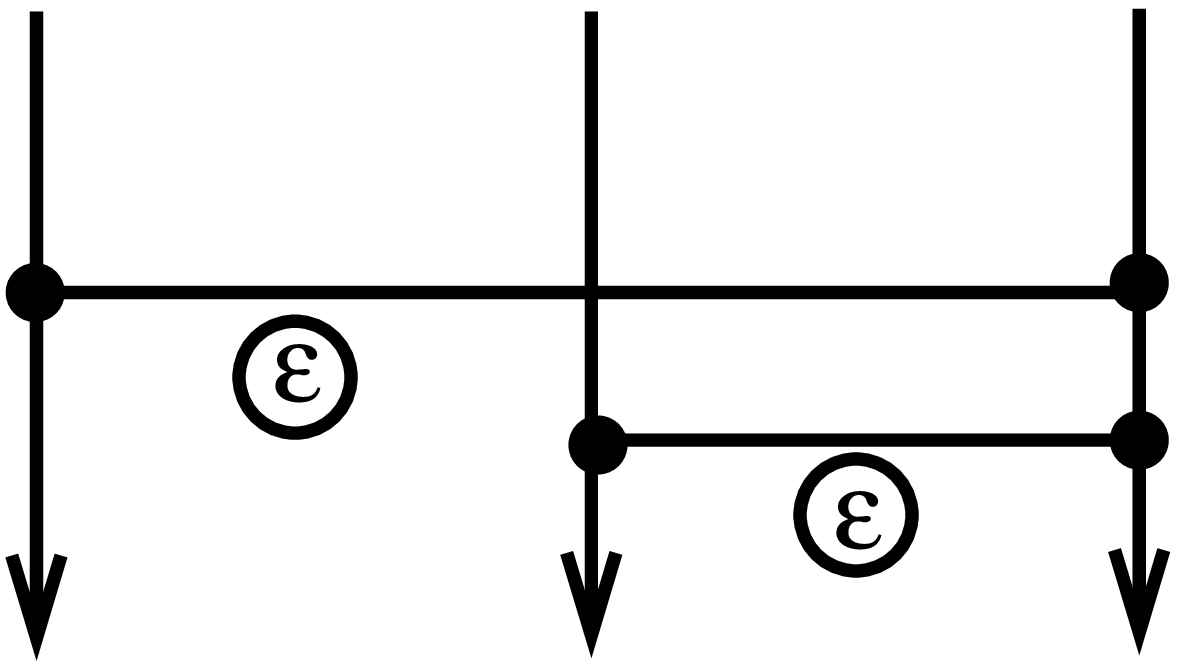}} \end{array} & & \\
\end{eqnarray*}
\caption{$\Delta \mathscr{R}$ relations} \label{deltarels}
\end{figure}

Let $\Delta \mathscr{R}=\left< \Delta \text{RI}, \Delta \text{RII}, \Delta \text{RIII} \right>$.  Define:
\[
\mathscr{V}=\frac{\mathbb{Z}[\mathscr{C}]}{\Delta \mathscr{R}},\,\,\, \mathscr{V}_n=\frac{\mathbb{Z}[\mathscr{C}]}{C_n+\Delta \mathscr{R}}
\]
Let $\mathscr{VK}$ denote the set of virtual knot \emph{diagrams} and $\mathscr{D}$ the set of Gauss diagrams.  Define $\hat{g}:\mathbb{Z}[\mathscr{VK}] \to \mathbb{Z}[\mathscr{D}]$ on generators to be the map which assigns to every virtual knot diagram its Gauss diagram.  Define $g:\mathbb{Z}[\mathscr{VK}]/\text{ker}(\hat{g}) \to \mathbb{Z}[\mathscr{D}]$ via Noether's First Isomorphism Theorem. For $D \in \mathscr{D}$, let $\bar{D} \in \mathscr{C}$ denote the chord diagram obtained from $D$ by erasing the arrow head of every arrow of $D$. We will also refer to this operation by the map $\text{Bar}:\mathbb{Z}[\mathscr{D}]\to\mathbb{Z}[\mathscr{C}]$.  Define $I:\mathbb{Z}[\mathscr{D}]\to \mathbb{Z}[\mathscr{C}]$ on generators $D \in \mathscr{D}$ by:
\[
I(D)=\sum_{D' \subset D} \overline{D'}=\overline{I_{\text{GPV}}(D)}
\]
where the sum is over all Gauss diagrams $D'$ obtained from $D$ be deleting a subset of its arrows.  
\begin{proposition} For all $v \in \text{Hom}_{\mathbb{Z}}(\mathscr{V},\mathbb{Q})$, $v \circ I$ is a virtual knot invariant that is invariant under the virtualization move.
\end{proposition}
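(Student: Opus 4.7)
The strategy is to factor $I$ as $I = \text{Bar} \circ I_{\text{GPV}}$ and then verify two compatibility facts: $\text{Bar}$ descends to a well-defined map $\mathscr{P} \to \mathscr{V}$, and $\text{Bar}$ is insensitive to the virtualization move on Gauss diagrams. Once these are established, both conclusions follow immediately.

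First I would check well-definedness as a virtual knot invariant. By Theorem \ref{iisom}, $I_{\text{GPV}}$ induces an isomorphism $\mathbb{Z}[\mathscr{K}] \to \mathscr{A}/\Delta \mathscr{P} = \mathscr{P}$, so it suffices to show that $\text{Bar}$ descends to a homomorphism $\mathscr{P} \to \mathscr{V}$, i.e.\ that $\text{Bar}(\Delta \mathscr{P}) \subseteq \Delta \mathscr{R}$. Comparing Figure \ref{polyakrels} with Figure \ref{deltarels}, one sees that $\Delta \text{RI}$, $\Delta \text{RII}$, $\Delta \text{RIII}$ are precisely what one obtains from $\Delta \text{PI}$, $\Delta \text{PII}$, $\Delta \text{PIII}$ by erasing every arrowhead, so the inclusion holds term by term on generators. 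Consequently $v \circ \text{Bar} \circ I_{\text{GPV}}$ is well-defined on $\mathbb{Z}[\mathscr{K}]$, and pulling back along $\hat{g}$ shows that $v \circ I$ is a virtual knot invariant.

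Next I would verify invariance under the virtualization move. The move's effect on a Gauss diagram (displayed just after Figure \ref{virtmove}) is to reverse the direction of a single arrow $a$ while preserving its sign $\varepsilon$. If $D$ and $D^{\ast}$ are two Gauss diagrams so related, there is an obvious bijection $D' \leftrightarrow (D')^{\ast}$ between subdiagrams, where $(D')^{\ast}$ agrees with $D'$ except for the orientation of $a$ in the case that $a \in D'$. Applying $\text{Bar}$ deletes exactly the arrowhead information that distinguishes them, so $\overline{D'} = \overline{(D')^{\ast}}$ as signed chord diagrams (the sign is untouched). Summing over subdiagrams gives $I(D) = I(D^{\ast})$ in $\mathbb{Z}[\mathscr{C}]$, hence $v \circ I$ is unchanged.

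Both steps are formal manipulations; there is no substantive obstacle. The only thing one must be careful about is that $\Delta \mathscr{R}$ has been chosen so that its relations exactly match the Polyak relations after forgetting arrowheads, and that the virtualization move on Gauss diagrams reverses an arrow but preserves its sign — a point explicitly noted in the text just before the description of the move on Gauss diagrams.
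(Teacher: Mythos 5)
Your proposal is correct and follows essentially the same route as the paper: it uses the factorization $I = \text{Bar} \circ I_{\text{GPV}}$ (built into the paper's definition $I(D)=\overline{I_{\text{GPV}}(D)}$), appeals to Theorem~\ref{iisom} to reduce well-definedness to checking that $\text{Bar}$ sends the Polyak relations into $\Delta\mathscr{R}$, and observes that the virtualization move reverses a single arrow in the Gauss diagram, which $\text{Bar}$ forgets. The paper's own proof is just a terser version of your argument.
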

\begin{proof} By the GPV theorem, it is sufficient to show that $v(\overline{\text{PI}})=v(\overline{\text{PII}})=v(\overline{\text{PIII}})=\{0\}$. However, this is clearly true since $\overline{\left< \text{PI}, \text{PII}, \text{PIII} \right>}=\Delta \mathscr{R}$.  For the second assertion, note that if $K$ and $K'$ are obtained from one another by a single virtualization move, then their Gauss diagrams differ only in the direction of a single arrow. In that case, $I(g(K))=I(g(K'))$. 
\end{proof}
\subsection{Universality of the model} In this section we establish the universality of $I$ and $I_n$.  The following lemma is useful in this regard.
\begin{lemma} \label{pok} Suppose that $v \in \text{Hom}_{\mathbb{Z}}(\mathscr{P},\mathbb{Q})$ is virtualization invariant.  In other words,
\[
v \circ I_{\text{GPV}} \left( \begin{array}{c} \scalebox{.25}{\psfig{figure=virtmove1.eps}} \end{array} \right)=v \circ I_{\text{GPV}} \left( \begin{array}{c} \scalebox{.25}{\psfig{figure=virtmove2.eps}} \end{array}\right)
\] 
Then for all signed dashed arrow diagrams $D$, if $D'$ is obtained from $D$ by changing the direction of one arrow, then $v(D)=v(D')$.
\end{lemma}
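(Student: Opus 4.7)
The plan is to induct on the number $n$ of arrows of $D$. The key observation is that a virtualization move on a virtual knot diagram corresponds on the Gauss diagram to reversing the direction of a single arrow, so by testing virtualization invariance against the \emph{right} knot I can extract the desired difference $v(D)-v(D')$ as the leading term of a sum of analogous differences on diagrams with strictly fewer arrows, all of which vanish by induction.

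For the base case $n=1$ the single-arrow diagrams $D$ and $D'$ are both zero in $\mathscr{P}$ by the Polyak relation $\Delta\text{PI}$, so $v(D)=0=v(D')$. For the inductive step, assume the lemma holds for all dashed arrow diagrams with fewer than $n$ arrows and let $D$ have $n\ge 2$ arrows, with distinguished arrow $\alpha$ and reversal $D'$. Let $K=i^{-1}(D)$ and $K'=i^{-1}(D')$; these are virtual knots whose Gauss diagrams differ only in the direction of the one arrow corresponding to $\alpha$, so $K'$ is obtained from $K$ by a single virtualization move. The hypothesis then gives $v(I_{\text{GPV}}(K))=v(I_{\text{GPV}}(K'))$. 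Expanding via $I_{\text{GPV}}(K)=\sum_{F\subset i^{-1}(D)} i(F)$ (and likewise for $K'$) and partitioning the subdiagrams according to whether they contain $\alpha$ (respectively $\alpha_{\mathrm{rev}}$) or not, the subdiagrams omitting that arrow match term-for-term on the two sides and cancel, leaving
\[
\sum_{E\subset D\setminus\{\alpha\}} \bigl[\,v(E\cup\{\alpha\})-v(E\cup\{\alpha_{\mathrm{rev}}\})\,\bigr]=0,
\]
where $E$ ranges over dashed sub-arrow-diagrams of $D\setminus\{\alpha\}$. The top term $E=D\setminus\{\alpha\}$ contributes exactly $v(D)-v(D')$, while every other summand is a bracket on two diagrams with at most $n-1$ arrows and therefore vanishes by the inductive hypothesis. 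Hence $v(D)=v(D')$.

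The only bookkeeping to be careful about is the bijection between (solid) subdiagrams of $i^{-1}(D)$ and dashed sub-arrow-diagrams of $D$, together with the observation that reversing $\alpha$ leaves all other arrows untouched, so the "omit $\alpha$" contributions really cancel. I do not expect a substantive obstacle beyond this: the real work is compressed into one application of virtualization invariance, and induction on arrow count absorbs the remaining tail.
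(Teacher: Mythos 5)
Your proof takes essentially the same approach as the paper's: the same induction on the number of arrows, the same step of applying virtualization invariance to $i^{-1}(D)$ and $i^{-1}(D')$, the same partition of the subdiagram sum by whether the distinguished arrow is present, and the same cancellation of the lower-order terms via the inductive hypothesis. The only minor divergence is the base case, where the paper handles long knots by a direct one-arrow expansion of $I_{\text{GPV}}$ rather than citing $\Delta\text{PI}$; your appeal to $\Delta\text{PI}$ is also valid since RI annihilates isolated small arrows of either orientation and sign, so both routes close the base case.
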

\begin{proof} The proof is by induction on the number of arrows of the dashed diagram $D$. If $n=1$, the result is obvious in the case of knots.  For long knots, we have:
\begin{eqnarray*}
v\circ I_{\text{GPV}}\left( \begin{array}{c} \scalebox{.15}{\psfig{figure=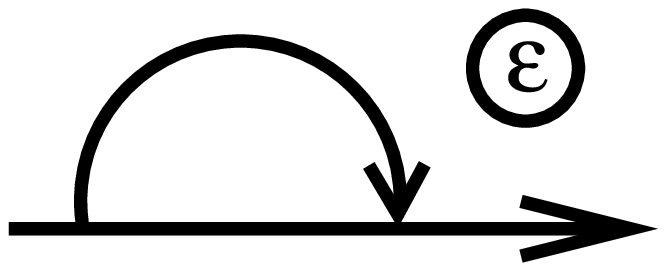}} \end{array} \right) &=& v \left( \begin{array}{c} \scalebox{.15}{\psfig{figure=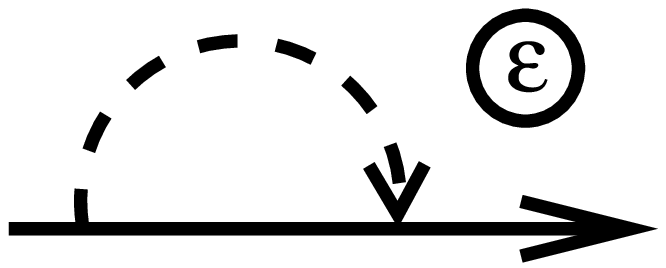}} \end{array}\right)+v\left( \begin{array}{c} \scalebox{.15}{\psfig{figure=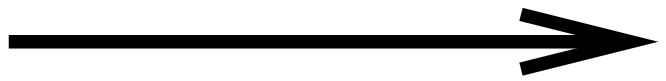}} \end{array}\right) \\
v\circ I_{\text{GPV}}\left( \begin{array}{c} \scalebox{.15}{\psfig{figure=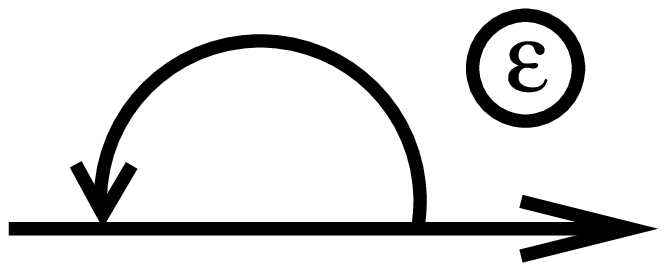}} \end{array}\right) &=& v \left(\begin{array}{c} \scalebox{.15}{\psfig{figure=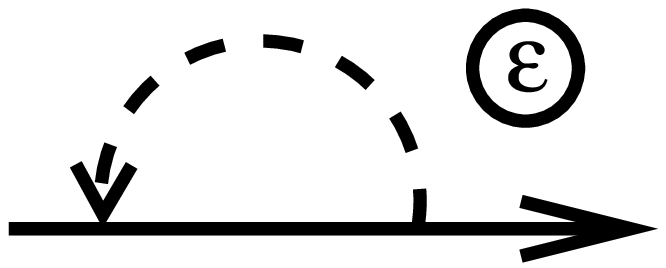}} \end{array} \right)+v\left(\begin{array}{c} \scalebox{.15}{\psfig{figure=unilemmNULL.eps}} \end{array} \right) \\
\Rightarrow v \left( \begin{array}{c} \scalebox{.15}{\psfig{figure=unilemmL2.eps}} \end{array}\right)&=&v\left(\begin{array}{c} \scalebox{.15}{\psfig{figure=unilemmR2.eps}} \end{array}\right) \\
\end{eqnarray*}
Suppose now that the theorem is true for $n$.  Let $D$ be a dashed arrow diagram with $n+1$ arrows, $D'$ a diagram obtained from $D$ by switching the direction of one arrow.  Let $B=i^{-1}(D)$ and $B'=i^{-1}(D')$.  Let $e$ denote the arrow whose direction is changed from $D$ to $D'$.
\begin{eqnarray*}
v\circ I_{\text{GPV}}(B) &=& v\left( \sum_{R \subset B} i(R) \right) \\
                         &=& v\left( \sum_{\stackrel{R\subset B}{e \in R}} i(R) \right)+v\left( \sum_{\stackrel{R\subset B}{e \notin R}} i(R)\right)\\
v\circ I_{\text{GPV}}(B') &=&  v\left( \sum_{\stackrel{R'\subset B'}{e \in R'}} i(R) \right)+v\left( \sum_{\stackrel{R'\subset B'}{e \notin R'}} i(R')\right)\\
\end{eqnarray*}
Now, for $e \in R$, let $R'$ denote the diagram obtained from $R$ by switching the direction of $e$.  By applying linearity of $v$ and subtracting the two equations of interest, we obtain:
\[
\sum_{\stackrel{R\subset B}{e \in R}} v(i(R))-v(i(R'))=0
\]
Now, for $R \subset B$, $R$ has between $1$ and $n+1$ arrows. Since $R$ and $R'$ differ only in the direction of a single arrow the induction hypothesis implies that $v(i(R))=v(i(R'))$ for all $R$ having $\le n$ arrows.  Thus,
\[
0=v(i(B))-v(i(B'))=v(D)-v(D')
\]
This establishes the lemma.
\end{proof}
\begin{theorem}\label{univ} The map $I:\mathbb{Z}[\mathscr{D}] \to \mathscr{V}$ is universal in the sense that if $v \in \text{Hom}_{\mathbb{Z}}(\mathscr{P},\mathbb{Q})$ is virtualization invariant, then there is a $v' \in \text{Hom}_{\mathbb{Z}}(\mathscr{V},\mathbb{Q})$ such that the following diagram commutes:
\[
\xymatrix{ \mathbb{Z}[\mathscr{D}] \ar[r]^I \ar[d]_{I_{\text{GPV}}} & \mathscr{V} \ar@{-->}[d]^{v'} \\
\mathscr{P} \ar[r]_v & \mathbb{Q}}              
\] 
\end{theorem}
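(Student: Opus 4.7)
The plan is to use Lemma \ref{pok} to show that $v$, when pulled back to dashed Gauss diagrams, is insensitive to arrow orientations and therefore factors through the Bar-map; the definition of $v'$ is then forced.

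First, lift $v$ along the projection $\mathscr{A} \to \mathscr{P}$ to a homomorphism $\tilde{v}: \mathscr{A} \to \mathbb{Q}$. Lemma \ref{pok} asserts that $\tilde{v}(D) = \tilde{v}(D')$ whenever $D$ and $D'$ differ only in the direction of one arrow. Iterating this over all arrows shows that $\tilde{v}(D)$ depends solely on the underlying signed chord diagram $\overline{D}$. Hence one may unambiguously define a homomorphism $v': \mathbb{Z}[\mathscr{C}] \to \mathbb{Q}$ on generators by setting $v'(C) = \tilde{v}(D)$ for \emph{any} dashed arrow diagram $D$ with $\overline{D} = C$, and extending by linearity.

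To descend $v'$ to the quotient $\mathscr{V} = \mathbb{Z}[\mathscr{C}]/\Delta\mathscr{R}$, it suffices to verify $v'(\Delta\mathscr{R}) = 0$. But as was already noted in the proof of the previous proposition, $\Delta\mathscr{R} = \overline{\langle \text{PI}, \text{PII}, \text{PIII}\rangle}$, so every generator of $\Delta\mathscr{R}$ is the Bar-image of some element of $\Delta\mathscr{P}$, which $\tilde{v}$ annihilates by assumption. Commutativity of the diagram is then a one-line check: for $D \in \mathbb{Z}[\mathscr{D}]$,
\[
v' \circ I(D) \;=\; v'\bigl(\overline{I_{\text{GPV}}(D)}\bigr) \;=\; \tilde{v}\bigl(I_{\text{GPV}}(D)\bigr) \;=\; v \circ I_{\text{GPV}}(D).
\]

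The entire weight of the argument rests on Lemma \ref{pok}, whose content is precisely that virtualization invariance of $v$ propagates from one-arrow diagrams to arbitrary dashed diagrams. Once that is in hand, the remainder is a formal exercise in quotient bookkeeping: the only thing one must confirm is that each $\Delta\mathscr{R}$ relation is a term-by-term Bar-image of the corresponding $\Delta\mathscr{P}$ relation, which is visible directly from the definitions in Figures \ref{polyakrels} and \ref{deltarels}.
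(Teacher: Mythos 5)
Your proposal is correct and follows essentially the same route as the paper's own proof: define $v'$ on a chord diagram by choosing an arbitrary orientation of its chords, invoke Lemma \ref{pok} (iterated over arrows) to show well-definedness, observe that each $\Delta\mathscr{R}$ generator is the Bar-image of the corresponding Polyak relation to descend to $\mathscr{V}$, and conclude by unwinding definitions. The only differences are cosmetic: you explicitly name the pulled-back map $\tilde{v}$ on $\mathscr{A}$ and spell out the one-line commutativity check, both of which the paper leaves implicit.
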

\begin{proof} Let $C \in \mathscr{C}$ and let $\vec{C}$ denote the signed arrow diagram obtained from $C$ by directing the chords of $C$.  Define $v'(C)=v(\vec{C})$.  Note that by Lemma \ref{pok}, if $\vec{C'}$ is an arrow diagram with $\text{Bar}(\vec{C'})=C$, then $v(\vec{C})=v(\vec{C'})$. Thus, $v'$ is well-defined on $\mathbb{Z}[\mathscr{C}]$.

To complete the proof, it is only necessary to show that $v'(r)=0$ for all $r\in \Delta \mathscr{R}$. For each $r\in \Delta \text{RI},\Delta \text{RII}$, or $\Delta \text{RIII}$, there is a $\vec{r} \in \text{Bar}^{-1}(r)$ such that $\vec{r} \in \text{PI},\text{PII}$, or $\text{PIII}$, respectively.  For example, we have:
\begin{eqnarray*}
r &=&  \begin{array}{c}\scalebox{.15}{\psfig{figure=chordR2_3.eps}} \end{array}+\begin{array}{c}\scalebox{.15}{\psfig{figure=chordR2_1.eps}} \end{array}+\begin{array}{c}\scalebox{.15}{\psfig{figure=chordR2_2.eps}} \end{array}\\
\vec{r} &=& \begin{array}{c}\scalebox{.15}{\psfig{figure=polyak2_1.eps}} \end{array}+\begin{array}{c}\scalebox{.15}{\psfig{figure=polyak2_2.eps}} \end{array}+\begin{array}{c}\scalebox{.15}{\psfig{figure=polyak2_3.eps}} \end{array} \\
\end{eqnarray*}
Since $v(\text{PI})=v(\text{PII})=v(\text{PIII})=0$, it follows that $v'(r)=0$.
\end{proof}

\begin{lemma} \label{pnok} For $v \in \text{Hom}_{\mathbb{Z}}(\mathscr{V}_n,\mathbb{Q})$, $v\circ I$ is a GPV finite-type invariant of order $\le n$.
\end{lemma}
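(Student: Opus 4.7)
The plan is to break the statement into two assertions: first, that $v \circ I$ is a genuine virtual knot invariant, and second, that it vanishes on every diagram carrying more than $n$ semivirtual crossings. Throughout, I write $\pi \colon \mathscr{V} \to \mathscr{V}_n$ for the canonical projection, so that any $v \in \text{Hom}_{\mathbb{Z}}(\mathscr{V}_n, \mathbb{Q})$ is the same datum as an element $v \circ \pi \in \text{Hom}_{\mathbb{Z}}(\mathscr{V}, \mathbb{Q})$ that happens to vanish on the image of $C_n$. With this identification, the preceding proposition immediately gives that $v \circ I = (v \circ \pi) \circ I$ descends to a virtual knot invariant (and in fact to a virtualization-invariant one, though that is not needed here).

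It remains to verify the order-$\le n$ property. Recall that a GPV finite-type invariant of order $\le n$ is one whose extension to diagrams with semivirtual crossings, via $v(\text{semivirtual}) = v(\text{positive}) - v(\text{virtual})$, vanishes whenever the number of semivirtual crossings exceeds $n$. At the level of Gauss diagrams, semivirtual crossings are exactly dashed arrows, and the defining relation $\begin{array}{c}\scalebox{.17}{\psfig{figure=dashnbhde.eps}}\end{array} = \begin{array}{c}\scalebox{.17}{\psfig{figure=solidnbhde.eps}}\end{array} - \begin{array}{c}\scalebox{.17}{\psfig{figure=nonbhd.eps}}\end{array}$ precisely describes how to expand a dashed arrow into a formal difference. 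Thus I need only show: if $D \in \mathbb{Z}[\mathscr{D}]$ has more than $n$ dashed arrows, then $I(D) = 0$ in $\mathscr{V}_n$.

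For this I would invoke the observation recorded in Theorem \ref{iisom}: when $D$ carries dashed arrows, every term $i(D')$ appearing in the expansion $I_{\text{GPV}}(D) = \sum_{D' \subset D} i(D')$ still contains all dashed arrows of $D$. Applying the bar map, every signed chord diagram appearing in $I(D) = \overline{I_{\text{GPV}}(D)}$ carries at least as many chords as $D$ has dashed arrows. If this number strictly exceeds $n$, every such chord diagram belongs to $C_n$ by definition, so $I(D)$ is already zero in $\mathscr{V}_n = \mathbb{Z}[\mathscr{C}]/(C_n + \Delta\mathscr{R})$, and consequently $v(I(D)) = 0$.

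The only subtle point, and the one I would be most careful about, is the compatibility between the algebraic expansion of dashed arrows inside $I_{\text{GPV}}$ and the topological expansion defining GPV finite type. Once one notes that both the relation $v(\text{semivirtual}) = v(\text{positive}) - v(\text{virtual})$ and the summation $\sum_{D' \subset D} i(D')$ are performed arrow-by-arrow, this compatibility is automatic, so no serious obstacle arises; the lemma follows by combining the preceding proposition with the single observation about dashed arrows above.
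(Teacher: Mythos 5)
Your proof is correct and follows essentially the same route as the paper's (two-line) argument: both hinge on the identity $I = \text{Bar}\circ I_{\text{GPV}}$ together with the observation from Theorem~\ref{iisom} that every term of $I_{\text{GPV}}(D)$ retains all dashed arrows of $D$, so that more than $n$ dashed arrows forces $I(D)$ into $C_n$. You simply unpack the paper's compressed computation $v\circ I\circ I_{\text{GPV}}^{-1}(D)=v(\overline{D})$ and spell out the compatibility between the semivirtual-expansion rule and $I_{\text{GPV}}$, which the paper leaves implicit.
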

\begin{proof} For $D \in \mathscr{P}$, $v \circ I\circ I_{\text{GPV}}^{-1}(D)=v(\overline{D})$.  If $D$ has more than $n$ dashed arrows, then $v(\overline{D})=0$.
\end{proof}
Let $p_n:\mathscr{V} \to \mathscr{V}_n$ denote that natural projection.  Define $I_n:\mathbb{Z}[D] \to \mathscr{V}_n$ to be the composition: $I_n=p_n \circ I$.

\begin{theorem} The map $I_n:\mathbb{Z}[\mathscr{D}] \to \mathscr{V}_n$ is universal in the sense that for all $v \in \text{Hom}(\mathscr{P}_n,\mathbb{Q})$ such that $v \circ (I_{\text{GPV}})_n$ is virtualization invariant, then there is a $v'\in \text{Hom}_{\mathbb{Z}}(\mathscr{V}_n,\mathbb{Q})$ such that the following diagram commutes.
\[
\xymatrix{ \mathbb{Z}[\mathscr{D}] \ar[r]^{I_n} \ar[d]_{(I_{\text{GPV}})_n} & \mathscr{V}_n \ar@{-->}[d]^{v'} \\
\mathscr{P}_n \ar[r]_v & \mathbb{Q}}              
\] 
\end{theorem}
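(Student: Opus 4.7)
The plan is to reduce this statement to Theorem \ref{univ} by lifting $v$ through the quotient maps, and then to show that the resulting universal map on $\mathscr{V}$ descends through the quotient defining $\mathscr{V}_n$.

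First, given $v \in \text{Hom}(\mathscr{P}_n,\mathbb{Q})$ such that $v \circ (I_{\text{GPV}})_n$ is virtualization invariant, I would set $\tilde v = v \circ \varphi_n : \mathscr{P} \to \mathbb{Q}$. Since $\tilde v \circ I_{\text{GPV}} = v \circ \varphi_n \circ I_{\text{GPV}} = v \circ (I_{\text{GPV}})_n$, the lifted homomorphism $\tilde v$ is virtualization invariant in the sense required by Lemma \ref{pok}. Applying Theorem \ref{univ} yields a homomorphism $\tilde v' \in \text{Hom}_{\mathbb{Z}}(\mathscr{V},\mathbb{Q})$ with $\tilde v' \circ I = \tilde v \circ I_{\text{GPV}}$, and, as in the proof of Theorem \ref{univ}, the construction gives the explicit formula $\tilde v'(C) = \tilde v(\vec C)$ for every signed chord diagram $C$, where $\vec C$ is any directing of $C$.

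Next I would verify that $\tilde v'$ vanishes on the submodule $C_n \subset \mathbb{Z}[\mathscr{C}]$ generated by chord diagrams with more than $n$ chords. For such a $C$, any directed lift $\vec C$ is a dashed arrow diagram with more than $n$ arrows, so $\vec C \in A_n$; consequently $\varphi_n(\vec C) = 0$ in $\mathscr{P}_n$, and therefore $\tilde v'(C) = v(\varphi_n(\vec C)) = 0$. Combining this with the fact (used in Theorem \ref{univ}) that $\tilde v'$ already annihilates $\Delta \mathscr{R}$, we conclude that $\tilde v'$ factors through $\mathscr{V}_n = \mathbb{Z}[\mathscr{C}]/(C_n + \Delta\mathscr{R})$, producing the desired $v' \in \text{Hom}_{\mathbb{Z}}(\mathscr{V}_n,\mathbb{Q})$ with $v' \circ p_n = \tilde v'$.

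Finally, commutativity of the new diagram follows by a short chase: $v' \circ I_n = v' \circ p_n \circ I = \tilde v' \circ I = \tilde v \circ I_{\text{GPV}} = v \circ \varphi_n \circ I_{\text{GPV}} = v \circ (I_{\text{GPV}})_n$. I do not expect any substantial obstacle; the only subtlety is the compatibility between the two filtrations $A_n \subset \mathscr{A}$ and $C_n \subset \mathbb{Z}[\mathscr{C}]$, and this is immediate from the definition since the $\text{Bar}$ map (and its inverse obtained by directing chords) preserves the number of arrows/chords.
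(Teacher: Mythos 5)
Your proof is correct and follows the same route the paper gestures at: lift $v$ from $\mathscr{P}_n$ to $\mathscr{P}$ via $\varphi_n$, invoke Theorem~\ref{univ} to get $\tilde v'$ on $\mathscr{V}$, and then descend to $\mathscr{V}_n$. Where the paper simply asserts this ``follows immediately'' from Lemma~\ref{pnok} and Theorem~\ref{univ}, you carry out the descent explicitly, and your argument is in fact cleaner than a literal appeal to Lemma~\ref{pnok} would be: that lemma states that elements of $\text{Hom}_{\mathbb{Z}}(\mathscr{V}_n,\mathbb{Q})$ yield GPV finite-type invariants (the easy converse direction), whereas what is actually needed here is that $\tilde v'$ annihilates $C_n$, which you verify directly from $\tilde v'(C)=v(\varphi_n(\vec C))=0$ when $\vec C$ has more than $n$ arrows. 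This is the real content of the compatibility between the $A_n$ and $C_n$ filtrations, and you identify it precisely. No gaps; this is a more careful write-up of the proof the author clearly had in mind.
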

\begin{proof} This follows immediately from Lemma \ref{pnok} and Theorem \ref{univ}.
\end{proof}
It is obvious that constant invariants are of GPV finite type of every order.  For virtual knots, they are generated by the combinatorial formula $\left<\bigcirc,\cdot\right>$.  For long virtual knots, they are generated by the combinatorial formula $<\underline{\hspace{.25cm}}\,\,,\cdot>$.  Denote both of these generators by $1$.
\begin{lemma} \label{lowo} For small orders, the following computations hold:
\begin{enumerate}
\item For virtual knots, $\text{Hom}_{\mathbb{Z}}(\mathscr{V}_3,\mathbb{Q})=<1>$
\item For long virtual knots, $\text{Hom}_{\mathbb{Z}}(\mathscr{V}_2,\mathbb{Q})=<1>$
\end{enumerate} 
\end{lemma}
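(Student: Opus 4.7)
The plan is to verify directly that $\mathscr{V}_n \otimes \mathbb{Q}$ is one-dimensional (spanned by the class of the empty signed chord diagram) for $n = 3$ in the virtual knot case and for $n = 2$ in the long virtual knot case. Since $\text{Hom}_{\mathbb{Z}}(\mathscr{V}_n,\mathbb{Q})$ is the $\mathbb{Q}$-linear dual of $\mathscr{V}_n \otimes \mathbb{Q}$, this gives the lemma. The modules $\mathscr{V}_n$ are by definition generated over $\mathbb{Z}$ by the finite set of signed chord diagrams with at most $n$ chords on the Wilson loop (resp.\ Wilson line), so the argument reduces to a finite combinatorial computation inside the quotient by $\Delta \mathscr{R}$.

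First I would enumerate signed chord diagrams by chord count, modulo isomorphism of the underlying chord diagram: the empty diagram, two $1$-chord diagrams (one per sign), the parallel/crossing (and, on the line, nested) $2$-chord configurations each with four sign assignments, and for virtual knots the small handful of $3$-chord configurations on $S^1$ each with $2^3$ sign assignments. Second, the relation $\Delta \text{RI}$, applied at each admissible position and sign, kills every $1$-chord signed diagram in $\mathscr{V}_n$. Third, using the three-term relation $\Delta \text{RII}$ in each relevant background, I would show that every $2$-chord signed diagram is a $\mathbb{Z}$-combination of $1$-chord diagrams (already zero) together with multiples of the empty diagram, so that $2$-chord diagrams contribute nothing new to $\mathscr{V}_n \otimes \mathbb{Q}$. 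This already finishes the long virtual knot statement for $\mathscr{V}_2$.

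For the virtual knot case one must then kill all $3$-chord diagrams. I would apply the eight-term relation $\Delta \text{RIII}$ in each $3$-chord background, repeatedly, to rewrite each $3$-chord signed diagram modulo $\Delta \mathscr{R}$ as a linear combination of $2$-chord and lower diagrams, which by the previous step are multiples of the empty diagram. The principal obstacle is exactly this last bookkeeping: one has roughly five isomorphism classes of $3$-chord diagrams on the Wilson loop, each with eight sign assignments, and $\Delta \text{RIII}$ itself has eight terms whose signs must be tracked consistently. Some configurations may require two or three successive applications of $\Delta \text{RIII}$ (possibly after first permuting chords via $\Delta \text{RII}$) to express them as sums of diagrams that have already been shown reducible. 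No new ideas beyond $\Delta \text{RI}$, $\Delta \text{RII}$, and $\Delta \text{RIII}$ are required; the work is purely in the case analysis.
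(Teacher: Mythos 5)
Your approach is a genuinely different route from the paper's. The paper's proof is a short application of the already-established Lemma~\ref{pok} to the explicit generators of the spaces of low-order GPV invariants computed by Goussarov, Polyak, and Viro in \cite{MR1763963}: for virtual knots of order $\le 3$, GPV found that the invariants are spanned by the constant $1$ and a single eight-term Gauss diagram formula, and since two of that formula's arrow diagrams differ only by reversing one arrow while carrying different coefficients, Lemma~\ref{pok} shows the formula cannot be virtualization invariant; the long virtual knot case of order $\le 2$ is handled the same way with GPV's two order-$2$ generators. That immediately forces $\text{Hom}_{\mathbb{Z}}(\mathscr{V}_n, \mathbb{Q}) = \langle 1 \rangle$. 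You instead propose to compute the primal object $\mathscr{V}_n \otimes \mathbb{Q}$ directly from the presentation $\mathbb{Z}[\mathscr{C}]/(C_n + \Delta\mathscr{R})$ by enumerating signed chord diagrams and reducing with $\Delta\text{RI}$, $\Delta\text{RII}$, $\Delta\text{RIII}$. Since $\text{Hom}_{\mathbb{Z}}(\mathscr{V}_n,\mathbb{Q})$ is the $\mathbb{Q}$-dual of the finitely generated module $\mathscr{V}_n\otimes\mathbb{Q}$, this is a legitimate and more self-contained way in (it does not import GPV's low-order tables and does not use Lemma~\ref{pok} at all), but it pays for that with a substantial combinatorial burden that the paper avoids entirely.

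One imprecision worth flagging in the plan: you write that $\Delta\text{RIII}$ would ``rewrite each $3$-chord signed diagram modulo $\Delta\mathscr{R}$ as a linear combination of $2$-chord and lower diagrams.'' This misdescribes the mechanism. $\Delta\text{RIII}$ is a single eight-term identity whose local picture mixes $2$-chord and $3$-chord configurations (the paper's later analysis of $\overline{\text{6T}}_n^{\pm}$ makes the degree-inhomogeneity explicit). Applying it produces particular linear relations linking some $3$-chord diagrams to $2$-chord ones, not a reduction rule that can be fed an arbitrary $3$-chord diagram. Your acknowledgment that ``some configurations may require two or three successive applications'' partially anticipates this, but a clean execution still has to identify exactly which relations are available in each background and verify that, after tensoring with $\mathbb{Q}$, they span the $3$-chord (respectively $2$-chord) stratum. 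The result is correct, so the computation will close, but the plan as written slightly underestimates the work and slightly misstates what the eight-term relation does.
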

\begin{proof} For the first assertion, we have from \cite{MR1763963} that 1 and the following formula generate the GPV finite-type invariants of order $\le 3$. 
\[
\left<3 \cdot \begin{array}{c}\scalebox{.15}{\psfig{figure=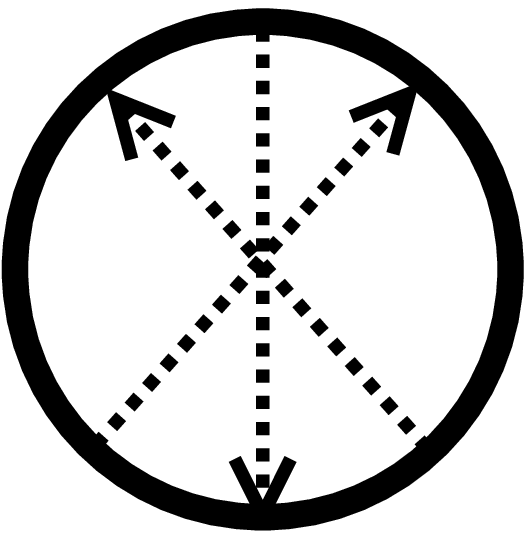}} \end{array}-\begin{array}{c}\scalebox{.15}{\psfig{figure=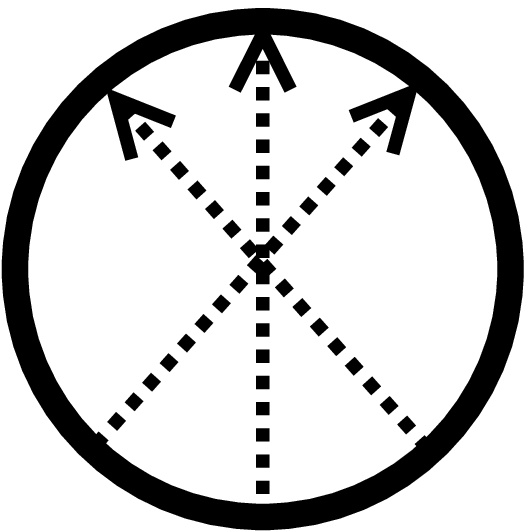}} \end{array}+\begin{array}{c}\scalebox{.15}{\psfig{figure=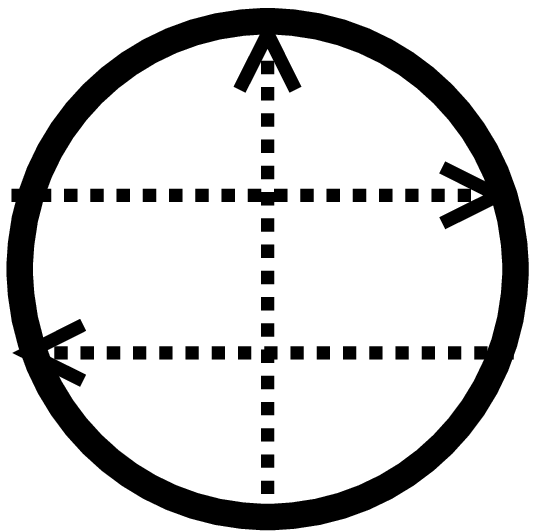}} \end{array}+\begin{array}{c}\scalebox{.15}{\psfig{figure=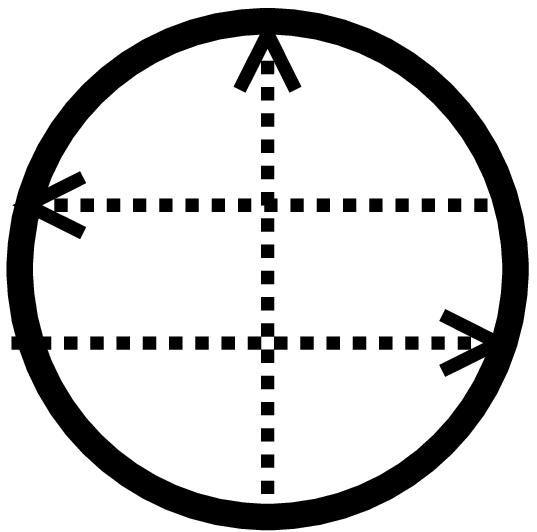}} \end{array}-\begin{array}{c}\scalebox{.15}{\psfig{figure=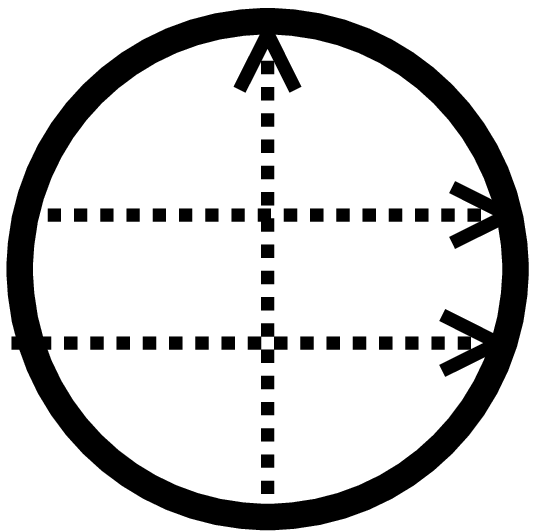}} \end{array}-\begin{array}{c}\scalebox{.15}{\psfig{figure=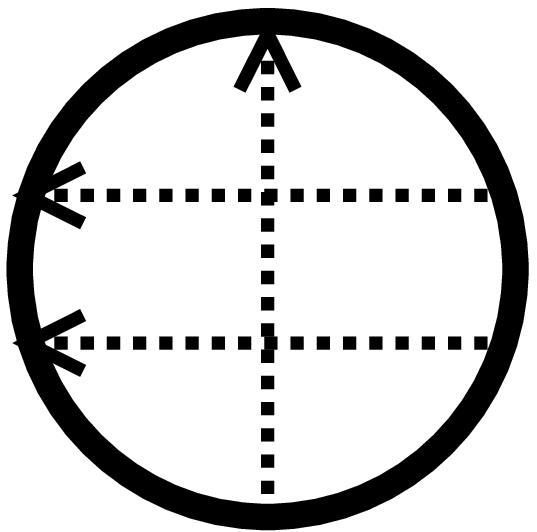}} \end{array}-\begin{array}{c}\scalebox{.15}{\psfig{figure=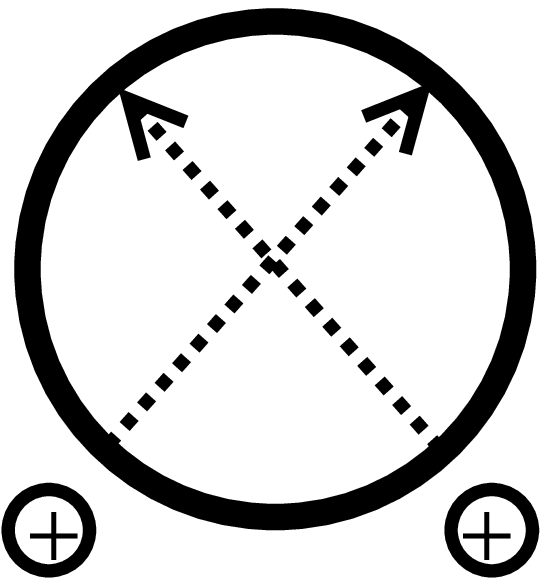}} \end{array}+\begin{array}{c}\scalebox{.15}{\psfig{figure=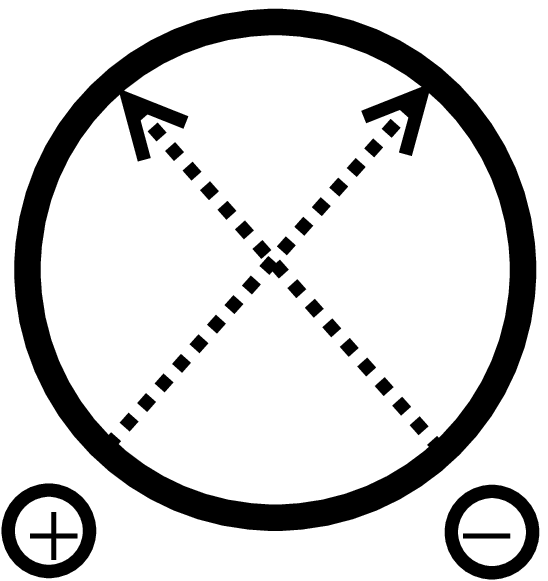}} \end{array}, \,\,\,\, \cdot \right>
\]
Since the value on diagrams 3 and 5 is different, Lemma \ref{pok} implies our result.

For the second assertion, we have from \cite{MR1763963} that 1 and the following two formulas generate the GPV finite-type invariants of order $\le 2$.
\[
\left< \begin{array}{c}\scalebox{.25}{\psfig{figure=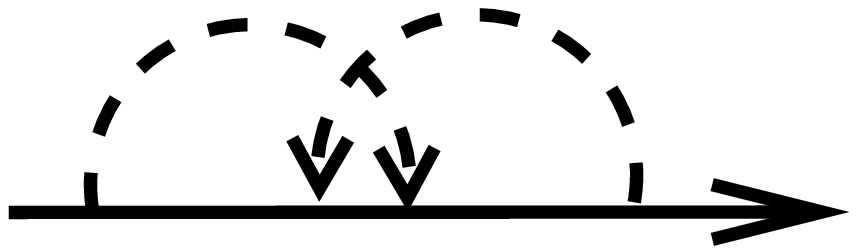}} \end{array},\,\,\, \cdot \right>,\,\,\,\left< \begin{array}{c}\scalebox{.25}{\psfig{figure=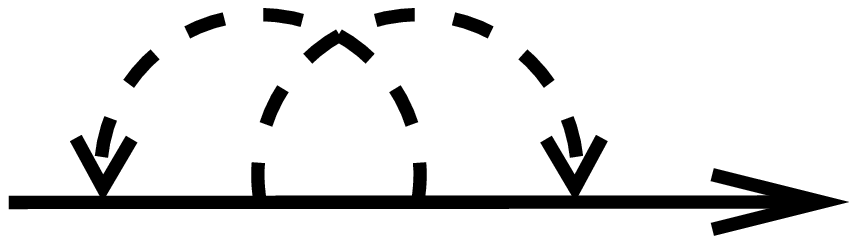}} \end{array},\,\,\, \cdot \right>
\]
In this case, Lemma \ref{pok} also implies our result.
\end{proof}
\subsection{Algebraic decomposition of $\text{Hom}_{\mathbb{Z}}(\mathscr{P}_n,\mathbb{Q})$ and $\text{Hom}_{\mathbb{Z}}(\mathscr{V}_n,\mathbb{Q})$} The hero of the decomposition is Polyak's algebra of arrow diagrams. Its precise relationship to the groups $\mathscr{V}_n$ and $\mathscr{P}_n$ is what allows us to obtain Theorem \ref{bigthm}. While this relationship is already well-known (see \cite{Polyak}), it is prudent to describe it carefully here.   

Define $\vec{\mathscr{F}}_n^{\pm}$, $\vec{\mathscr{F}}_n$ to be the free abelian group generated by the set of signed arrow diagrams and unsigned arrow diagrams having exactly $n$ arrows, respectively. Define $\overline{\mathscr{F}}_n^{\pm}$, $\overline{\mathscr{F}}_n$ to be the free abelian group generated by the set of signed chord diagrams and unsigned chord diagrams having exactly $n$ chords, respectively.  

The decomposition is the same for the $\mathscr{V}_n$ and $\mathscr{P}_n$.  Therefore we define variables which stand in place of either case.
\newline
\centerline{
\begin{tabular}{c|c|c}
 & Chord Case & Arrow Case \\ \hline
$\mathscr{B}_n$ & $\mathscr{V}_n$ & $\mathscr{P}_n$ \\
$B_n$ & $C_n$ & $A_n$ \\
$R$ & $\Delta \mathscr{R}$ & $\Delta \mathscr{P}$ \\
$\mathscr{F}_n^{\pm}$ & $\overline{\mathscr{F}}_n^{\pm}$ & $\vec{\mathscr{F}}_n^{\pm}$ \\
$\mathscr{F}_n$ & $\overline{\mathscr{F}}_n$ & $\vec{\mathscr{F}}_n$ \\
\end{tabular}
}
In this section, we investigate the following short exact sequence:
\[
\xymatrix{0 \ar[r] & \frac{B_n+R}{B_{n+1}+R} \ar[r] & \mathscr{B}_{n+1} \ar[r]^{\pi_n} & \mathscr{B}_n \ar[r] & 0}
\]
and (more importantly), its dual:
\[
\xymatrix{0 \ar[r] & \text{Hom}_{\mathbb{Z}}\left(\mathscr{B}_n,\mathbb{Q}\right) \ar[r]^{\pi_n^*} & \text{Hom}_{\mathbb{Z}}\left(\mathscr{B}_{n+1},\mathbb{Q}\right) \ar[r] & \text{Hom}_{\mathbb{Z}}\left(\frac{B_n+R}{B_{n+1}+R},\mathbb{Q}\right)}
\]
In the next section, we will show that the rightmost module in the dual sequence vanishes and hence $\pi_n^*$ is an isomorphism. For this, we use some intermediate groups which are isomorphic to the groups in the grading of Polyak's algebra of arrow diagrams.  The relations for the intermediate groups are given below.
\[
\underline{\vec{\text{1T}}_n:} \begin{array}{c} \scalebox{.1}{\psfig{figure=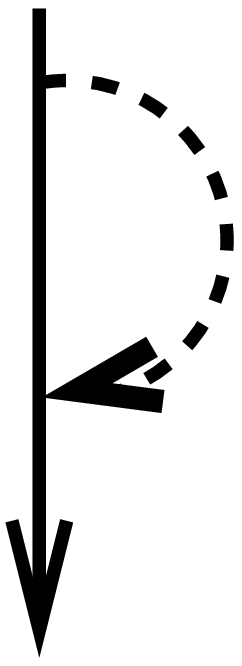}} \end{array}=0 ,\,\, \underline{\vec{\text{NS}}_n:} \begin{array}{c} \scalebox{.1}{\psfig{figure=polyak2_2.eps}} \end{array}+\begin{array}{c} \scalebox{.1}{\psfig{figure=polyak2_3.eps}} \end{array}=0
\]
\begin{eqnarray*}
\underline{\vec{\text{6T}}_n^{\pm}:}  \begin{array}{c}\scalebox{.1}{\psfig{figure=polyak3_2.eps}} \end{array}+\begin{array}{c}\scalebox{.1}{\psfig{figure=polyak3_3.eps}} \end{array}+\begin{array}{c}\scalebox{.1}{\psfig{figure=polyak3_4.eps}} \end{array} &=& \begin{array}{c}\scalebox{.1}{\psfig{figure=polyak3_6.eps}} \end{array}+\begin{array}{c}\scalebox{.1}{\psfig{figure=polyak3_7.eps}} \end{array}+\begin{array}{c}\scalebox{.1}{\psfig{figure=polyak3_8.eps}} \end{array}  \\
\end{eqnarray*}
\begin{eqnarray*}
\underline{\vec{\text{6T}}_n:}  \begin{array}{c}\scalebox{.1}{\psfig{figure=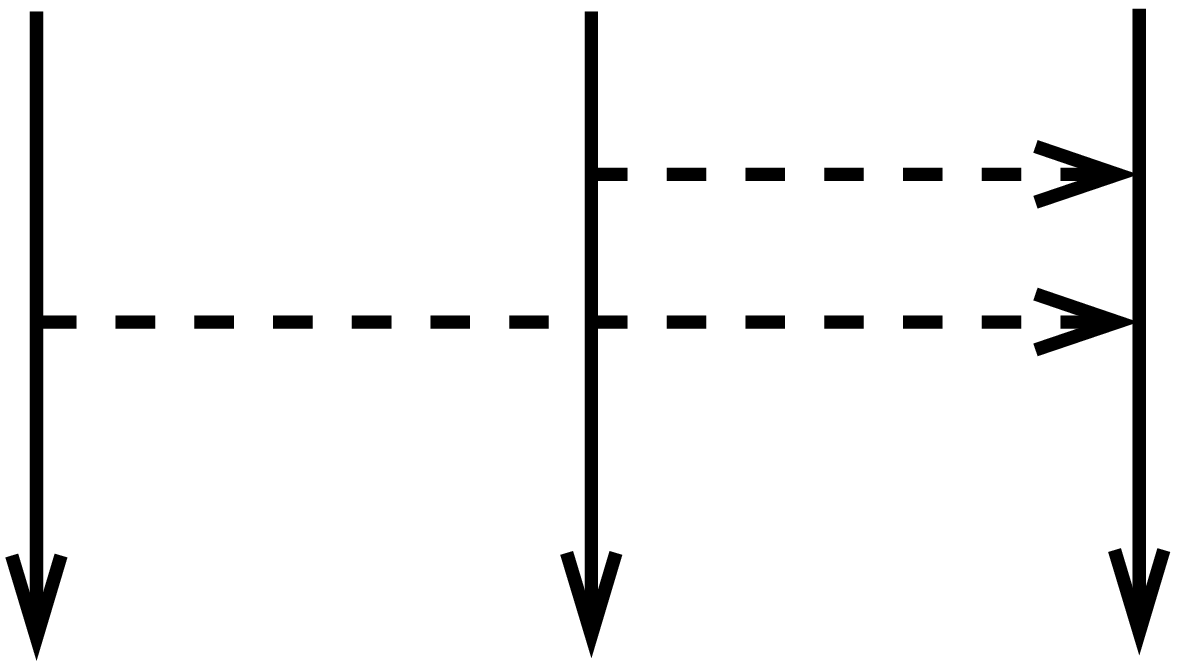}} \end{array}+\begin{array}{c}\scalebox{.1}{\psfig{figure=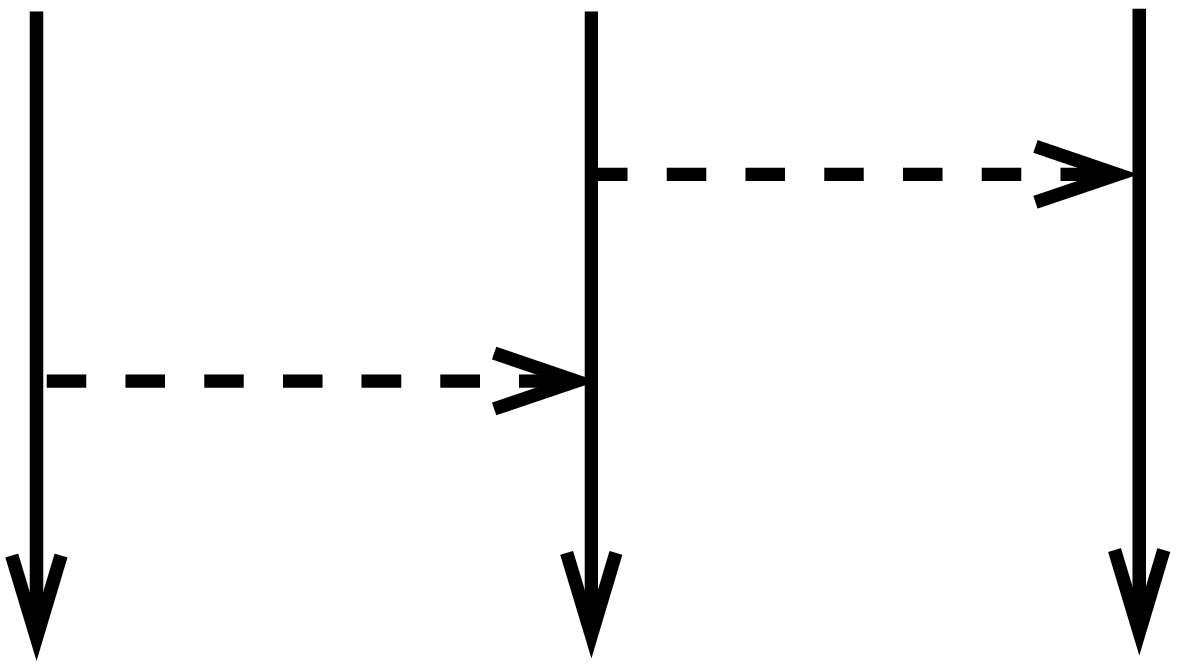}} \end{array}+\begin{array}{c}\scalebox{.1}{\psfig{figure=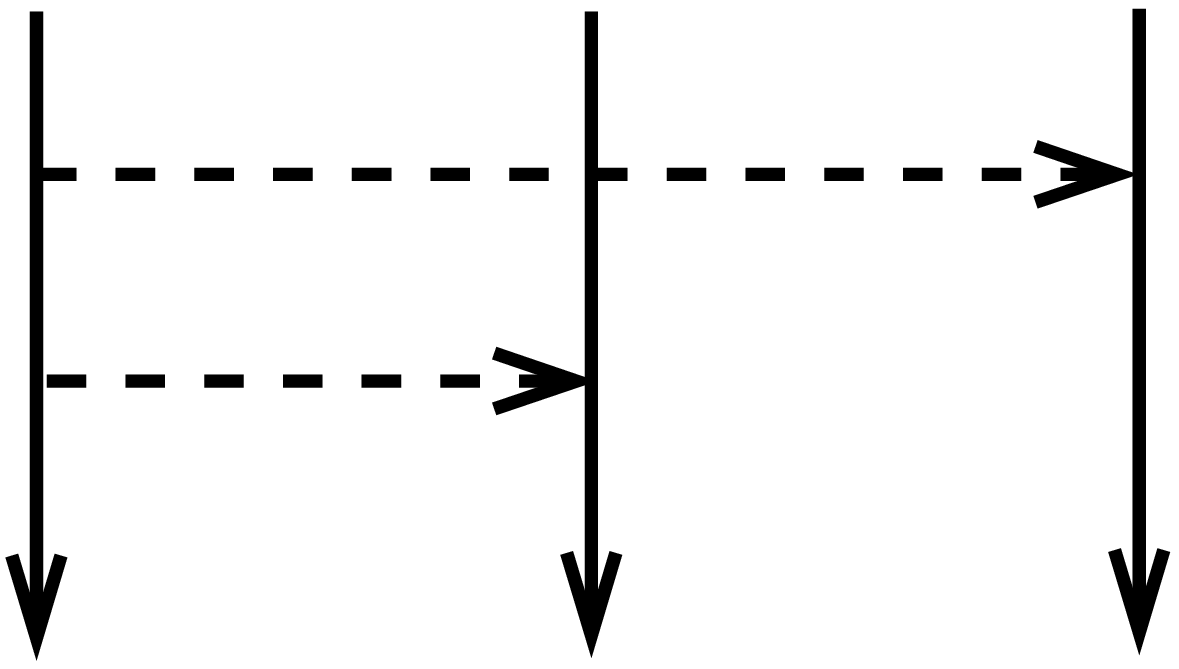}} \end{array} &=&  \begin{array}{c}\scalebox{.1}{\psfig{figure=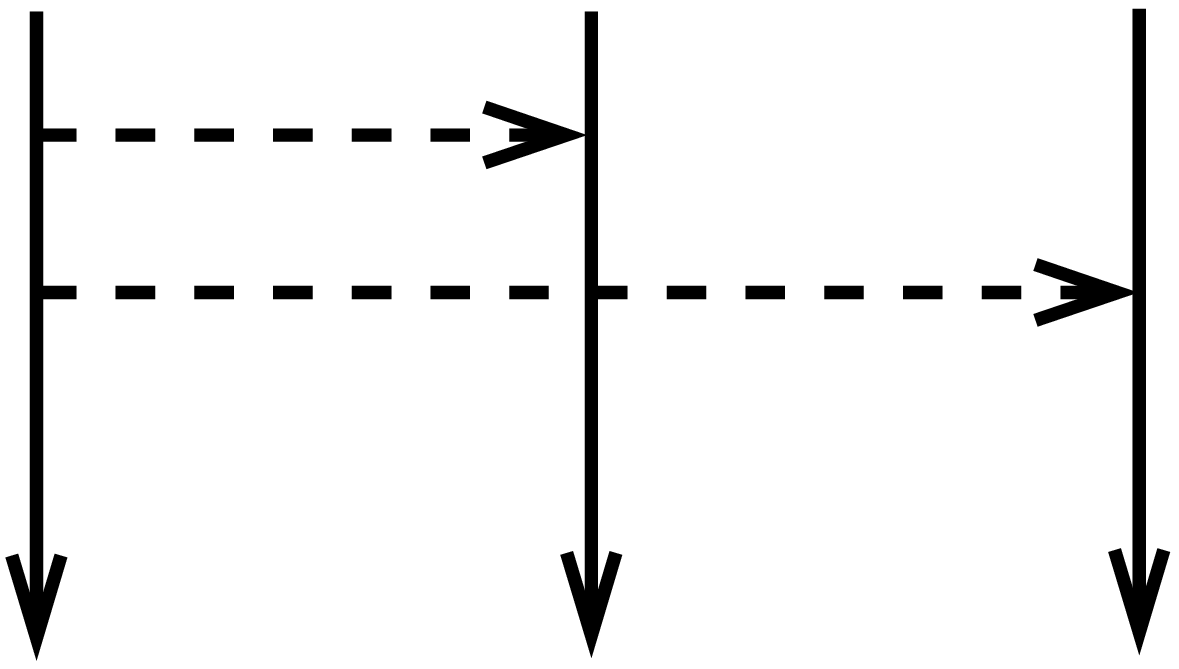}} \end{array}+\begin{array}{c}\scalebox{.1}{\psfig{figure=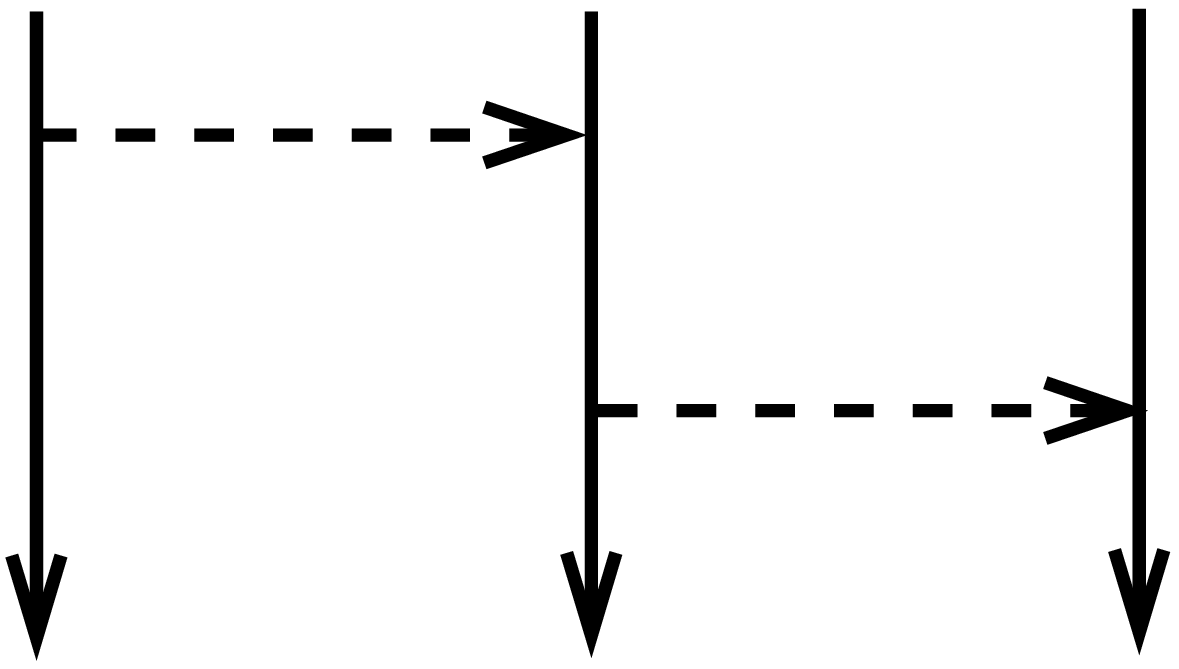}} \end{array}+\begin{array}{c}\scalebox{.1}{\psfig{figure=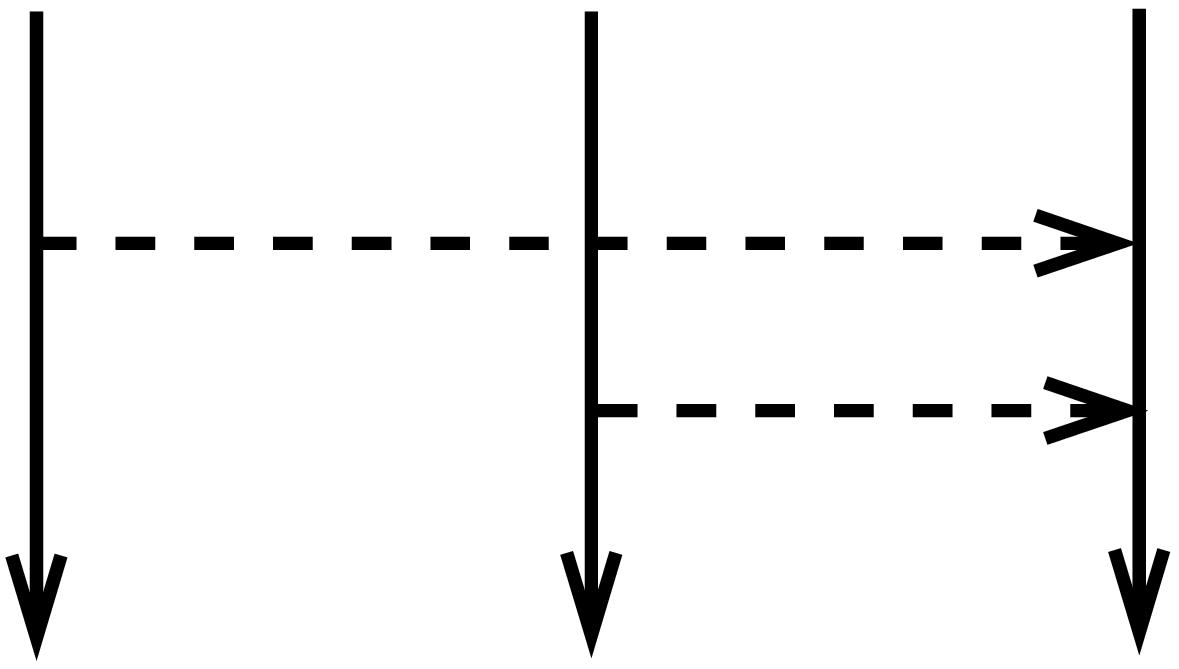}} \end{array} \\
\end{eqnarray*}
Using the same convention as above, define $\overline{\text{1T}}_n^{\pm}=\text{Bar}(\Delta\text{PI}_n)$,      \,$\overline{\text{NS}}_n=\text{Bar}(\vec{\text{NS}}_n)$,\,    $\overline{\text{6T}}_n^{\pm}=\text{Bar}(\vec{\text{6T}}_n^{\pm})$, and $\overline{\text{6T}}_n=\text{Bar}(\vec{\text{6T}}_n)$. Define $\text{6T}_n=\overline{\text{6T}}_n$ or $\vec{\text{6T}}_n$, $\text{6T}_n^{\pm}=\overline{\text{6T}}_n^{\pm}$ or $\vec{\text{6T}}_n^{\pm}$, $\text{1T}_n^{\pm}=\overline{\text{1T}}_n^{\pm}$ or $\Delta\text{PI}_n$, and $\text{NS}_n=\overline{\text{NS}}_n$ or $\vec{\text{NS}}_n$.
\begin{theorem} There is an injection:
\[
\text{Hom}_{\mathbb{Z}}\left( \frac{B_{n-1}+R}{B_n+R}, \mathbb{Q} \right)\hookrightarrow \text{Hom}_{\mathbb{Z}} \left(\frac{\mathscr{F}_n}{\left<\text{1T}_n,\text{6T}_n\right>} ,\mathbb{Q} \right)
\]
\end{theorem}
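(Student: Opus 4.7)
The plan is to construct a natural surjection
\[
\Psi:\,\frac{\mathscr{F}_n}{\langle\text{1T}_n,\text{6T}_n\rangle}\;\twoheadrightarrow\;\frac{B_{n-1}+R}{B_n+R},
\]
and then dualize: since $\mathbb{Q}$ is a divisible (hence injective) $\mathbb{Z}$-module, the functor $\text{Hom}_{\mathbb{Z}}(-,\mathbb{Q})$ is exact, and in particular it converts $\Psi$ into the injection claimed in the theorem.

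To unpack the target, note that $B_n\subseteq B_{n-1}$, so the second isomorphism theorem gives
\[
\frac{B_{n-1}+R}{B_n+R}\;\cong\;\frac{B_{n-1}}{B_{n-1}\cap(B_n+R)}\;=\;\frac{B_{n-1}}{B_n+(B_{n-1}\cap R)}.
\]
Moreover $B_{n-1}/B_n$ is canonically isomorphic to the free abelian group $\mathscr{F}_n^{\pm}$ on signed $n$-diagrams, because $B_{n-1}$ is freely generated by diagrams with at least $n$ arrows and $B_n$ kills those with strictly more than $n$. Letting $J_n\subseteq\mathscr{F}_n^{\pm}$ denote the image of $B_{n-1}\cap R$, the target is rewritten as $\mathscr{F}_n^{\pm}/J_n$.

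The core step is to verify the inclusion $J_n\supseteq\langle\text{1T}_n^{\pm},\text{NS}_n,\text{6T}_n^{\pm}\rangle$. Every generator of $R$ arises as a local instance of $\Delta\text{PI}$, $\Delta\text{PII}$, or $\Delta\text{PIII}$ placed inside a larger arrow/chord diagram. A direct inspection of Figures \ref{polyakrels} and \ref{deltarels} shows: a local $\Delta\text{PI}$ embedded in an $n$-diagram supplies a $\text{1T}_n^{\pm}$ generator; a suitable $\mathbb{Z}$-combination of local $\Delta\text{PII}$'s supplies an $\text{NS}_n$ generator (any cross-degree contributions cancel); and similarly local $\Delta\text{PIII}$'s supply a $\text{6T}_n^{\pm}$ generator. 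This produces a surjection $\mathscr{F}_n^{\pm}/\langle\text{1T}_n^{\pm},\text{NS}_n,\text{6T}_n^{\pm}\rangle\twoheadrightarrow\mathscr{F}_n^{\pm}/J_n$.

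Finally I would establish the natural isomorphism $\mathscr{F}_n^{\pm}/\langle\text{NS}_n\rangle\cong\mathscr{F}_n$ by using the local $\text{NS}_n$ relations to absorb each $\ominus$-arrow into a $-1$ coefficient on the same diagram with that arrow re-signed $\oplus$; under this identification $\langle\text{1T}_n^{\pm}\rangle$ and $\langle\text{6T}_n^{\pm}\rangle$ descend onto $\langle\text{1T}_n\rangle$ and $\langle\text{6T}_n\rangle$ (since each family of signed relations occurs in all sign patterns), and composition yields $\Psi$. The main obstacle is the core step: depending on whether the Polyak relations are degree-homogeneous in arrow count, one may need to form specific $\mathbb{Z}$-combinations of local instances in order to cancel any lower-degree contributions before extracting the pure degree-$n$ generators $\text{NS}_n$ and $\text{6T}_n^{\pm}$. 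The sign-absorption step is less subtle but still requires checking that the $\text{NS}_n$ relations, across all embedding positions, suffice to flip the sign of any individual arrow at the cost of a $-1$ coefficient.
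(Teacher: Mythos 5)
Your overall strategy matches the paper's: rewrite $(B_{n-1}+R)/(B_n+R)$ as a quotient of $\mathscr{F}_n^{\pm}$ via the second isomorphism theorem, show its relation submodule contains $\langle\text{1T}_n^{\pm},\text{NS}_n,\text{6T}_n^{\pm}\rangle$, identify $\mathscr{F}_n^{\pm}/\langle\text{NS}_n\rangle\cong\mathscr{F}_n$ by sign-absorption (the paper's map $\Xi(F)=(-1)^{m(F)}|F|$), obtain a surjection from $\mathscr{F}_n/\langle\text{1T}_n,\text{6T}_n\rangle$, and dualize using injectivity of $\mathbb{Q}$.

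The one place where your mechanism is off is the core step. A local instance of $\Delta\text{PII}$ or $\Delta\text{PIII}$ embedded in an ambient $n$-arrow diagram is indeed not degree-homogeneous: for $\Delta\text{PII}$ you get the two degree-$n$ terms of $\text{NS}_n$ plus one degree-$(n+1)$ term, and for $\Delta\text{PIII}$ you get the six degree-$n$ terms of $\text{6T}_n^{\pm}$ plus two degree-$(n+1)$ terms. You propose to eliminate these higher-degree pieces by taking $\mathbb{Z}$-combinations of local instances so that they ``cancel,'' but no such combination exists: there is no second embedding of $\Delta\text{PII}$ producing the negative of that particular $(n+1)$-arrow diagram. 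What actually makes the argument work is that the quotient is by $B_n+R$, not by $R$ alone; the excess degree-$(n+1)$ diagrams already lie in $B_n$ (i.e.\ in $C_n$ or $A_n$) and are killed for free. This is precisely the calculation the paper displays: $\text{NS}_n = (\text{element of }\Delta\mathscr{R}) - (\text{diagram in }C_n)$, and analogously for $\text{6T}_n^{\pm}$. Replacing ``cancellation of cross-degree contributions'' with ``absorption into $B_n$'' closes the gap and makes your argument agree with the paper's.

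Two smaller remarks. First, your hedge about needing to check that $\text{NS}_n$ across all embedding positions suffices to flip an arbitrary arrow's sign is resolved by noting that $\text{NS}_n$ acts on one arrow at a time while leaving the ambient $n-1$ arrows fixed and arbitrary, so iterating flips any subset of signs; this is exactly what $\Xi$ encodes, with $\ker\Xi=\langle\text{NS}_n\rangle$. Second, to descend the surjection to $\mathscr{F}_n/\langle\text{1T}_n,\text{6T}_n\rangle$ you need $\Xi^{-1}(\langle\text{1T}_n,\text{6T}_n\rangle)=\langle\text{1T}_n^{\pm},\text{NS}_n,\text{6T}_n^{\pm}\rangle$, not merely one inclusion; this follows since $\Xi$ restricted to $\langle\text{1T}_n^{\pm},\text{6T}_n^{\pm}\rangle$ surjects onto $\langle\text{1T}_n,\text{6T}_n\rangle$ (every unsigned relation lifts to a signed one) and $\ker\Xi=\langle\text{NS}_n\rangle$.
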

\begin{proof} First we rewrite $\frac{B_{n-1}+R}{B_n+R}$ using Noether's Second Isomorphism Theorem.
\begin{eqnarray*}
\frac{B_{n-1}+R}{B_n+R} &=& \frac{\mathscr{F}_n^{\pm}+(B_n+R)}{B_n+R} \\
          &\cong& \frac{\mathscr{F}_n^{\pm}}{\mathscr{F}_n^{\pm} \cap (B_n+R)} \\
\end{eqnarray*}
\begin{lemma} $\left< \text{6T}_n^{\pm}, \text{1T}_n^{\pm}, \text{NS}_n \right> \subset \mathscr{F}_n^{\pm} \cap (B_n+R)$
\end{lemma}
\begin{proof} It is certainly true that $\text{LHS} \subset \mathscr{F}_n^{\pm}$. We will show that $\text{LHS} \subset B_n+R$ for the chord diagram case only:
\begin{eqnarray*}
\underline{\overline{\text{6T}}_n^{\pm}:}\,\,\, \begin{array}{c}\scalebox{.1}{\psfig{figure=chordR3_2.eps}} \end{array} &+&\begin{array}{c}\scalebox{.1}{\psfig{figure=chordR3_3.eps}} \end{array}+\begin{array}{c}\scalebox{.1}{\psfig{figure=chordR3_4.eps}} \end{array} - \begin{array}{c}\scalebox{.1}{\psfig{figure=chordR3_6.eps}} \end{array}-\begin{array}{c}\scalebox{.1}{\psfig{figure=chordR3_7.eps}} \end{array}-\begin{array}{c}\scalebox{.1}{\psfig{figure=chordR3_8.eps}} \end{array} \\
&=& \begin{array}{c}\scalebox{.1}{\psfig{figure=chordR3_1.eps}} \end{array}+\begin{array}{c}\scalebox{.1}{\psfig{figure=chordR3_2.eps}} \end{array}+\begin{array}{c}\scalebox{.1}{\psfig{figure=chordR3_3.eps}} \end{array}+\begin{array}{c}\scalebox{.1}{\psfig{figure=chordR3_4.eps}} \end{array}\\
&-&\left.\begin{array}{c}\scalebox{.1}{\psfig{figure=chordR3_5.eps}} \end{array}-\begin{array}{c}\scalebox{.1}{\psfig{figure=chordR3_6.eps}} \end{array}-\begin{array}{c}\scalebox{.1}{\psfig{figure=chordR3_7.eps}} \end{array}-\begin{array}{c}\scalebox{.1}{\psfig{figure=chordR3_8.eps}} \end{array} \right\} \in \Delta \mathscr{R}\\
&+& \left.\left(- \begin{array}{c}\scalebox{.1}{\psfig{figure=chordR3_1.eps}} \end{array}+\begin{array}{c}\scalebox{.1}{\psfig{figure=chordR3_5.eps}} \end{array} \right) \right\} \in C_n
\end{eqnarray*}
\[
\underline{\overline{\text{NS}}_n:}\,\,\, \begin{array}{c} \scalebox{.1}{\psfig{figure=chordR2_1.eps}} \end{array}+\begin{array}{c} \scalebox{.1}{\psfig{figure=chordR2_2.eps}} \end{array} =
\underbrace{\begin{array}{c}\scalebox{.1}{\psfig{figure=chordR2_3.eps}} \end{array}+\begin{array}{c}\scalebox{.1}{\psfig{figure=chordR2_1.eps}} \end{array}+\begin{array}{c}\scalebox{.1}{\psfig{figure=chordR2_2.eps}} \end{array}}_{ \in \Delta \mathscr{R}}-\underbrace{\begin{array}{c}\scalebox{.1}{\psfig{figure=chordR2_3.eps}} \end{array}}_{\in C_n}
\]
This completes the proof of the lemma.
\end{proof}
Therefore there is an exact sequence:
\[
\xymatrix{
\frac{\mathscr{F}_n^{\pm}}{\left< \text{6T}_n^{\pm}, \text{1T}_n^{\pm}, \text{NS}_n \right>} \ar[r] & \frac{\mathscr{F}_n^{\pm}}{\mathscr{F}_n^{\pm} \cap (B_n+R)} \ar[r] & 0}
\]
Define $\Xi:\mathscr{F}_n^{\pm} \to \mathscr{F}_n$ as in \cite{Polyak}.  For $F \in \mathscr{F}_n^{\pm}$, let $m(F)$ denote the number of $\ominus$ signs appearing in the diagram.  Denote by $|F|\in\mathscr{F}_n$ the diagram obtained by deleting all the signs of $F$.  Then $\Xi$ is defined on generators by:
\[
\Xi(F)=(-1)^{m(F)}|F|
\]
Extend $\Xi$ to all of $\mathscr{F}_n^{\pm}$ using linearity. It is clear that:
\begin{eqnarray*}
\Xi(\left<\text{6T}_n^{\pm}\right>) &\subset& \left< \text{6T}_n \right>\\
\Xi(\left<\text{1T}_n^{\pm}\right>) &\subset& \left< \text{1T}_n \right> \\
\Xi(\left<\text{NS}_n \right>) &=& \{0\}  \\
\end{eqnarray*}
The surjection $\mathscr{F}_n^{\pm} \to \mathscr{F}_n \to \mathscr{F}_n/\left<\text{1T}_n,\text{6T}_n \right>$ has kernel $\left< \text{6T}_n^{\pm}, \text{1T}_n^{\pm}, \text{NS}_n \right>$. Taking the dual of the above short exact sequence gives the injection:
\[
\text{Hom}_{\mathbb{Z}}\left( \frac{B_{n-1}+R}{B_n+R}, \mathbb{Q} \right)\hookrightarrow \text{Hom}_{\mathbb{Z}}\left(\frac{\mathscr{F}_n^{\pm}}{\left< \text{6T}_n^{\pm}, \text{1T}_n^{\pm}, \text{NS}_n \right>} ,\mathbb{Q}\right) \cong \text{Hom}_{\mathbb{Z}} \left(\frac{\mathscr{F}_n}{\left<\text{1T}_n,\text{6T}_n\right>} ,\mathbb{Q} \right)
\]
\end{proof}
As we shall shortly see, it is convenient to change from $\mathbb{Z}$-modules to vector spaces over $\mathbb{Q}$.  The following computation shows that this does not affect our result.
\begin{eqnarray*}
\text{Hom}_{\mathbb{Z}} \left(\frac{\mathscr{F}_n}{\left<\text{1T}_n,\text{6T}_n\right>} ,\mathbb{Q} \right) &\cong& \text{Hom}_{\mathbb{Z}} \left(\frac{\mathscr{F}_n}{\left<\text{1T}_n,\text{6T}_n\right>} ,\text{Hom}_{\mathbb{Z}}(\mathbb{Q},\mathbb{Q}) \right)\\
&\cong& \text{Hom}_{\mathbb{Z}} \left(\frac{\mathscr{F}_n}{\left<\text{1T}_n,\text{6T}_n\right>}\otimes \mathbb{Q} ,\mathbb{Q} \right)
\end{eqnarray*}
\subsection{Proof of Theorem 1} To prove the theorem, we must also consider the four term relation in $\overline{\mathscr{F}}_n$:
\[
\underline{\overline{\text{4T}}_n:} \,\,\, \begin{array}{c}\scalebox{.15}{\psfig{figure=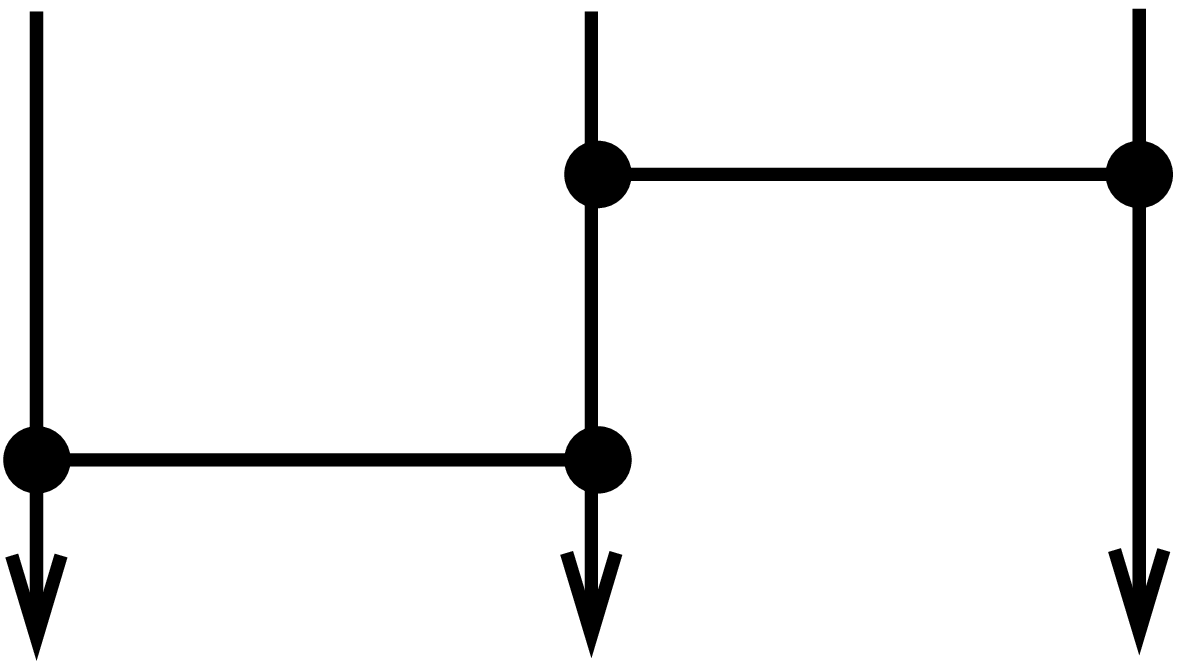}} \end{array}-\begin{array}{c}\scalebox{.15}{\psfig{figure=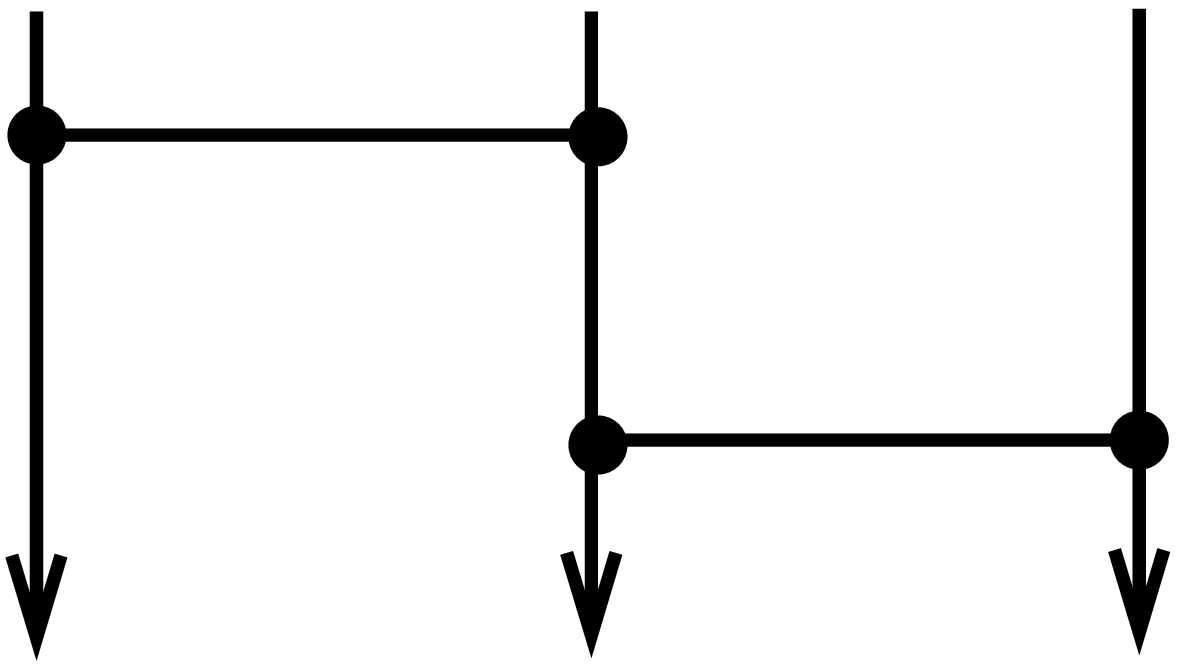}} \end{array}=\begin{array}{c}\scalebox{.15}{\psfig{figure=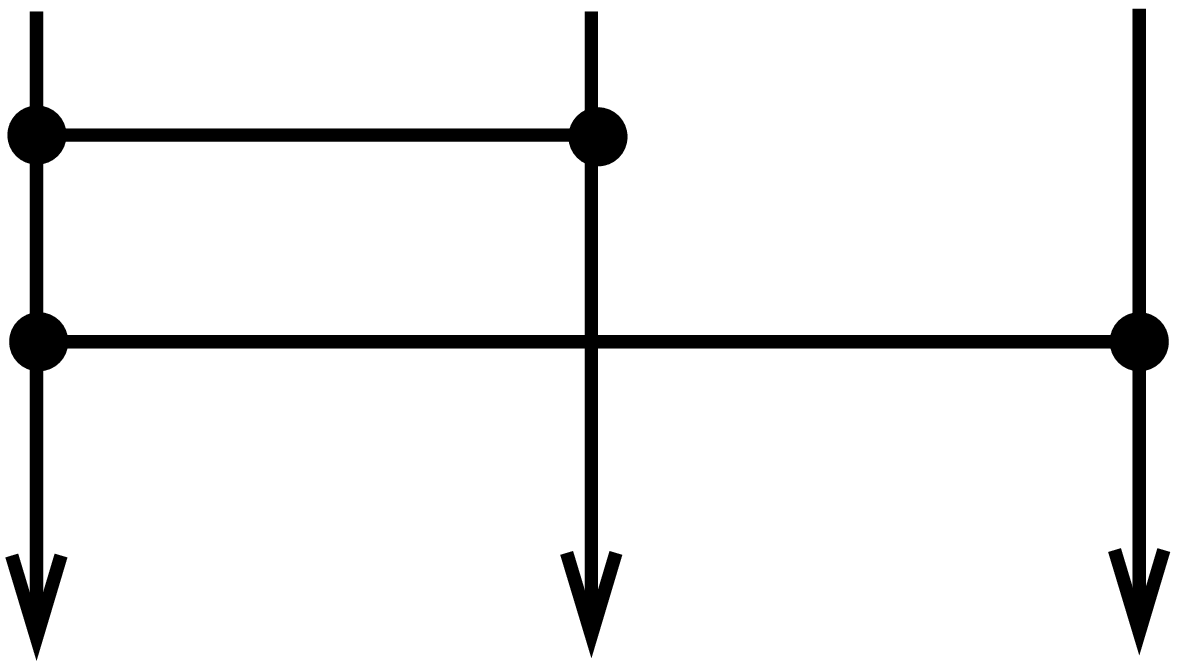}} \end{array}-\begin{array}{c}\scalebox{.15}{\psfig{figure=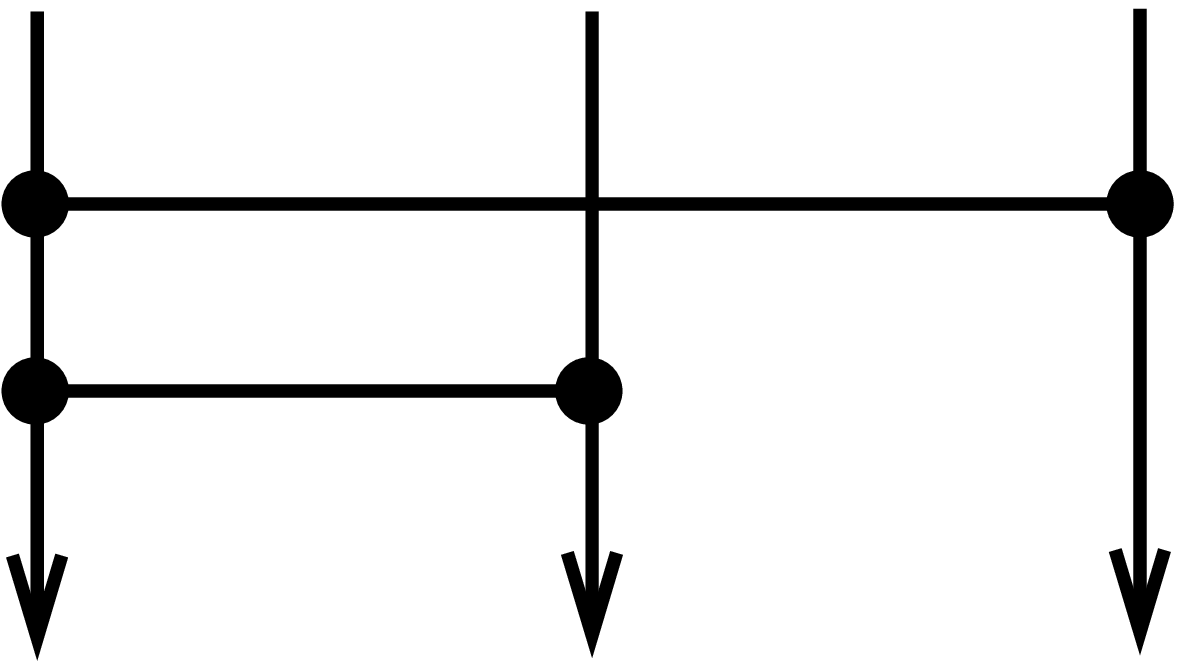}} \end{array}
\]
Define $\mu_n:\overline{\mathscr{F}}_n \to \vec{\mathscr{F}}_n$ to be:
\[
\mu_n \left( \begin{array}{c}\scalebox{.25}{\psfig{figure=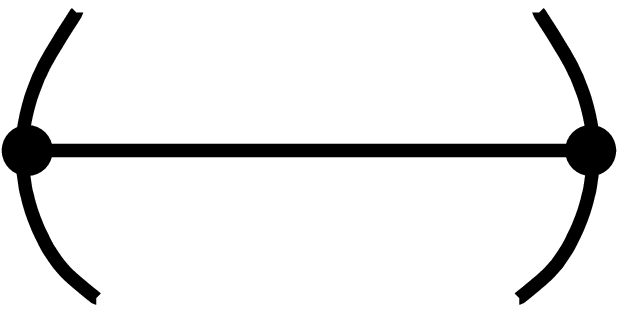}} \end{array} \right)=\begin{array}{c}\scalebox{.25}{\psfig{figure=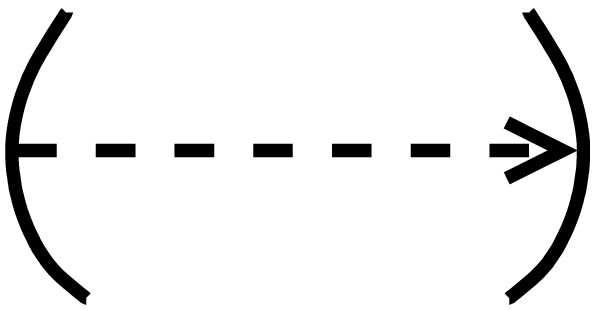}} \end{array}+\begin{array}{c}\scalebox{.25}{\psfig{figure=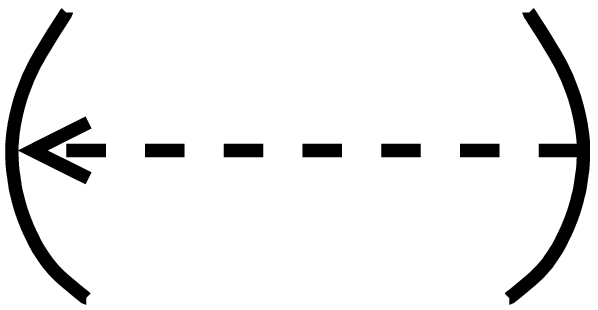}} \end{array}
\]
Here, the sum is over all such resolutions.  In fact, if $F \in \overline{\mathscr{F}}_n$, then $\mu_n(F)$ is a sum of $2^n$ arrow diagrams. The map $\mu_n$ is also called the average map. The following theorem is due to Polyak.
\begin{theorem}[Polyak \cite{Polyak}] The average map $\mu_n:\overline{\mathscr{F}}_n \to \vec{\mathscr{F}}_n$ satisfies the following properties.
\begin{enumerate}
\item $\mu_n\left(\left< \overline{\text{4T}}_n\right>\right)\subset \left< \vec{\text{6T}}_n \right>$
\item $\mu_n\left(\left< \overline{\text{1T}}_n\right>\right)\subset \left< \vec{\text{1T}}_n \right>$
\item The average map descends to a well-defined map:
\[
\mu_n:\frac{\overline{\mathscr{F}}_n}{\left<\overline{\text{1T}}_n,\overline{\text{4T}}_n \right>}\otimes \mathbb{Q} \to \frac{\vec{\mathscr{F}}_n}{\left<\vec{\text{1T}}_n,\vec{\text{6T}}_n \right>}\otimes \mathbb{Q}
\]
\end{enumerate}
\end{theorem}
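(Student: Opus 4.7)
The plan is to verify the three properties separately, reducing each to a local combinatorial computation. The key structural observation is that $\mu_n$ acts chord-by-chord: if a relation $r \in \overline{\mathscr{F}}_n$ is supported on a prescribed set $S$ of $k$ chords (meaning the chord diagrams appearing in $r$ agree outside $S$), then $\mu_n(r)$ splits as a sum over the $2^{n-k}$ orientation choices for the remaining $n-k$ chords, each summand being the same local contribution $\mu_k(r|_S)$ stapled onto a fixed background. This reduces parts (1) and (2) to checking a single instance of the relation on just the chords it involves.

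For part (2), the $\overline{\text{1T}}_n$ relation is supported on a single isolated chord, so $\mu_n$ applied to it produces, after factoring out the $2^{n-1}$ background orientations, precisely the two orientations of an isolated arrow. Both lie in $\langle\vec{\text{1T}}_n\rangle$, so the inclusion is immediate. For part (1), the $\overline{\text{4T}}_n$ relation is supported on exactly two chords, and the task reduces to verifying that $\mu_2$ applied to one copy of $\overline{\text{4T}}_2$ lands in $\langle\vec{\text{6T}}_n\rangle$. Expanding produces $4\cdot 2^2 = 16$ arrow diagrams, which I would organize by first fixing the orientation of one of the two chords and then summing over the orientations of the other. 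For each fixed outer orientation, the eight resulting terms should assemble, with the correct signs inherited from the alternating structure of $\overline{\text{4T}}_n$, into one side of a $\vec{\text{6T}}_n$ instance; summing the two outer choices should then produce a $\mathbb{Z}$-linear combination of $\vec{\text{6T}}_n$ relations. I expect the bookkeeping in this local check to be the main obstacle: one must carefully track endpoint positions along the Wilson loop, match the four arrow configurations coming from each of the four chord diagrams in $\overline{\text{4T}}_n$ against the six arrow configurations appearing in $\vec{\text{6T}}_n$, and confirm that the apparent excess cancels across the full sixteen-term sum.

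Part (3) is then a formal consequence of (1) and (2). Since $\mu_n$ sends $\langle \overline{\text{1T}}_n, \overline{\text{4T}}_n\rangle$ into $\langle \vec{\text{1T}}_n, \vec{\text{6T}}_n\rangle$, it descends to a $\mathbb{Z}$-linear map on the quotients; I would conclude by applying $-\otimes\mathbb{Q}$ to both sides, which is harmless because $\mathbb{Q}$ is flat over $\mathbb{Z}$. The tensoring with $\mathbb{Q}$ plays no role in getting $\mu_n$ to descend; it is included only to place the target in the $\mathbb{Q}$-vector-space setting used by the subsequent dualization and dimension comparison in the proof of Theorem \ref{bigthm}.
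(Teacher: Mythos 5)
The paper does not prove this theorem; it is quoted from Polyak's paper \cite{Polyak} and used as a black box, so there is no argument in the paper to compare yours against. Judged on its own, your structural observation is the right one: $\mu_n$ factors as a local average on the $k$ chords a relation actually involves, tensored with a $2^{n-k}$-fold sum of orientation choices for the background chords. This disposes of (2) immediately (an isolated chord averages to a sum of the two isolated arrows, each already a $\vec{\text{1T}}_n$ generator), and (3) is indeed purely formal once (1) and (2) hold, with $-\otimes\mathbb{Q}$ applied afterward as a functor.

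The entire content of the theorem, however, is part (1), and there your sketch has a gap that is more than a bookkeeping chore. You predict that fixing the orientation of one of the two chords and averaging over the other produces eight signed arrow diagrams that ``assemble into one side of a $\vec{\text{6T}}_n$ instance.'' But one side of $\vec{\text{6T}}_n$ has three terms, so eight terms cannot form one side; the dimension count does not match, and $16$ is not a multiple of $6$ either, so the sixteen local terms cannot simply be a disjoint union of $\vec{\text{6T}}_n$ instances. Whatever combination of cancellations and coincidences actually makes $\mu_2(\overline{\text{4T}}_2)$ land in $\langle\vec{\text{6T}}_2\rangle$, it is not organized in the way you describe. A hint that the relationship is genuinely entangled is visible elsewhere in the paper: applying $\text{Bar}$ to $\vec{\text{6T}}_n$ gives $\overline{\text{4T}}_n+\overline{\text{2T}}_n$ rather than $\overline{\text{4T}}_n$ alone, so forgetting orientations does not respect the ``split by one chord's direction'' picture. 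Since you explicitly defer the sixteen-term computation and your predicted organization of it is inconsistent, part (1) --- and hence the theorem --- is not actually proved by the proposal; the local check would need to be carried out in full (or one must simply defer to Polyak, as the paper does).
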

Define $\mu_n'(D)=\frac{1}{2^n} \cdot \overline{\mu_n(D)}$.  The following diagrams describe this map.
\[
\mu_n':\frac{\overline{\mathscr{F}}_n}{\left<\overline{\text{1T}}_n,\overline{\text{4T}}_n \right>}\otimes \mathbb{Q} \to \frac{\overline{\mathscr{F}}_n}{\left<\overline{\text{1T}}_n,\overline{\text{6T}}_n \right>}\otimes \mathbb{Q}
\]
\[
\xymatrix{ \frac{\overline{\mathscr{F}}_n}{\left<\overline{\text{1T}}_n,\overline{\text{4T}}_n \right>}\otimes \mathbb{Q} \ar[rr]^{2^n \cdot \mu_n'} \ar[dr]_{\mu_n}& & \frac{\overline{\mathscr{F}}_n}{\left<\overline{\text{1T}}_n,\vec{\text{6T}}_n \right>}\otimes \mathbb{Q} \\ & \frac{\vec{\mathscr{F}}_n}{\left<\vec{\text{1T}}_n,\vec{\text{6T}}_n \right>}\otimes \mathbb{Q} \ar[ur]_{\text{Bar}} & }
\]
In fact, for all $D \in \overline{\mathscr{F}}_n$, $\mu_n'(D)=D$.  Hence, $\mu_n'$ is a surjection and $\frac{\overline{\mathscr{F}}_n}{\left<\overline{\text{1T}}_n,\overline{\text{6T}}_n \right>}\otimes \mathbb{Q}$ is a homomorphic image of $\frac{\overline{\mathscr{F}}_n}{\left<\overline{\text{1T}}_n,\overline{\text{4T}}_n \right>}\otimes \mathbb{Q}$. Since $\mu_n'\left(\left<\overline{\text{6T}}_n \right> \right)=\{0\}$, diagrams in $\overline{\mathscr{F}}_n$ satisfy relations $\overline{\text{1T}}_n$, $\overline{\text{6T}}_n$, \emph{and} $\overline{\text{4T}}_n$ in the homomorphic image $\frac{\overline{\mathscr{F}}_n}{\left<\overline{\text{1T}}_n,\overline{\text{6T}}_n \right>}\otimes \mathbb{Q}$.  Now, note that we may write $\overline{\text{6T}}_n$ in the following form:
\[
\underbrace{\begin{array}{c}\scalebox{.1}{\psfig{figure=fourterm1.eps}} \end{array}-\begin{array}{c}\scalebox{.1}{\psfig{figure=fourterm2.eps}} \end{array}-\left( \begin{array}{c}\scalebox{.1}{\psfig{figure=fourterm3.eps}} \end{array}-\begin{array}{c}\scalebox{.1}{\psfig{figure=fourterm4.eps}} \end{array}\right)}_{\overline{\text{4T}}_n}+\underbrace{\begin{array}{c}\scalebox{.1}{\psfig{figure=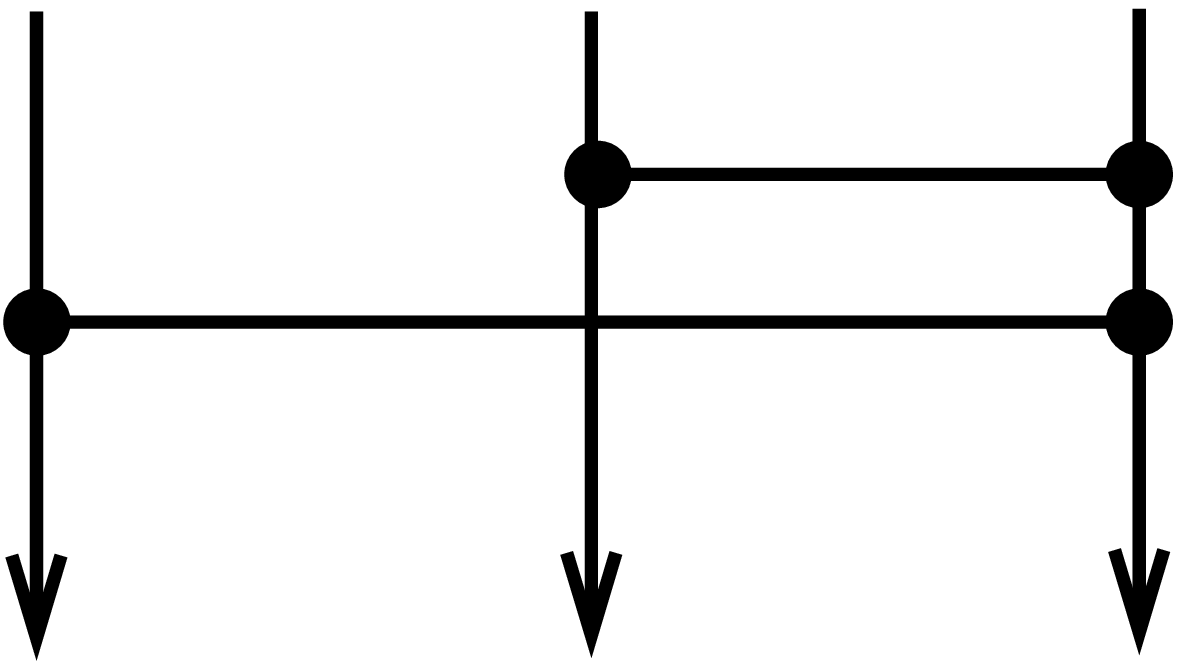}} \end{array}-\begin{array}{c}\scalebox{.1}{\psfig{figure=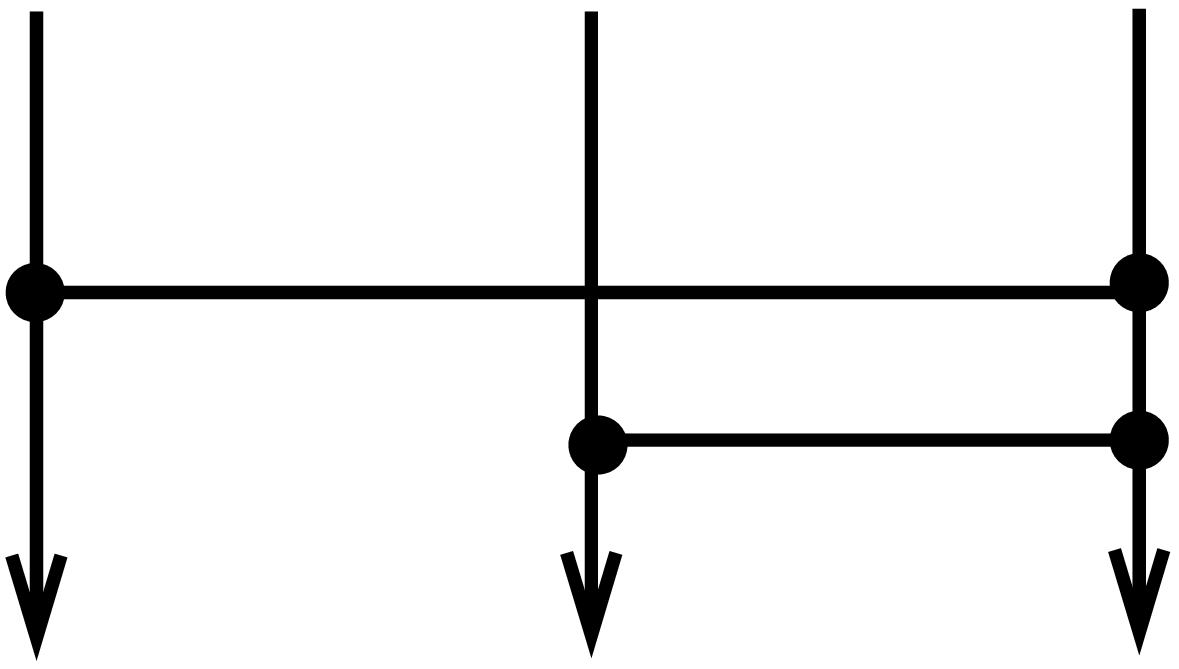}} \end{array}}_{\overline{\text{2T}}_n}=0
\]
Therefore, the two-term relation (see \cite{cdbook}) is also satisfied in the homomorphic image $\frac{\overline{\mathscr{F}}_n}{\left<\overline{\text{1T}}_n,\overline{\text{6T}}_n \right>}\otimes \mathbb{Q}$.
\[
\underline{\overline{\text{2T}}_n:} \,\,\, \begin{array}{c}\scalebox{.1}{\psfig{figure=twoterm1.eps}} \end{array}=\begin{array}{c}\scalebox{.1}{\psfig{figure=twoterm2.eps}} \end{array}
\]
\begin{lemma} If the two term relation is satisfied, then all chord diagrams with $n$ chords are equivalent to the diagram given below:
\newline
\newline
\centerline{
\begin{tabular}{cc}
Virtual Knot Case & Long Virtual Knot Case \\
$\underbrace{\begin{array}{c}\scalebox{.25}{\psfig{figure=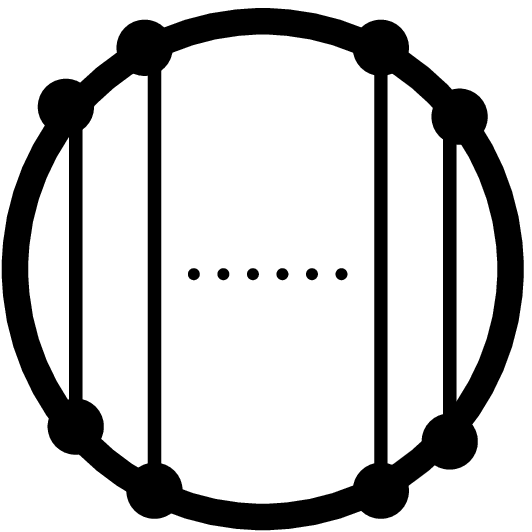}} \end{array}}_{n \text{ chords}} $& $\underbrace{\begin{array}{c}\scalebox{.25}{\psfig{figure=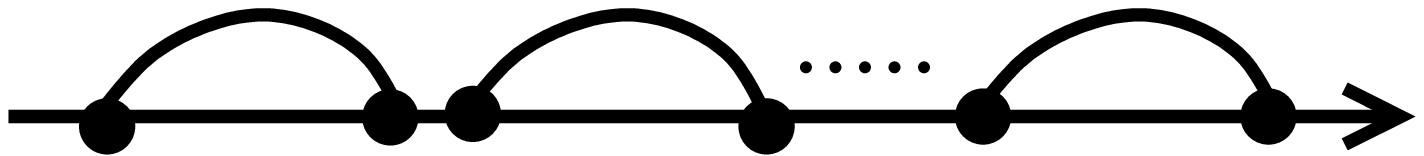}} \end{array}}_{n \text{ chords}}$
\end{tabular}
}
\end{lemma}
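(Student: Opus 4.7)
The plan is to proceed by induction on the number of chords $n$, exploiting the fact that the two-term relation $\overline{\text{2T}}_n$ is precisely an equation between two chord diagrams that differ only in the order of two adjacent chord endpoints belonging to distinct chords. Thus, modulo $\overline{\text{2T}}_n$, any two such adjacent endpoints may be transposed in the cyclic (or linear) sequence of chord endpoints along the Wilson loop (or Wilson line), and the lemma reduces to the statement that these transpositions are enough to reach a single normal form.

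For the base case $n=1$ there is essentially a unique chord diagram, and it already coincides with the canonical diagram. For the inductive step, fix a diagram $D$ with $n$ chords and choose any chord $c \subset D$ with endpoints $p_1$ and $p_2$. In the Wilson loop case, $p_1$ and $p_2$ cut the loop into two arcs; pick either one, and iteratively apply $\overline{\text{2T}}_n$ to slide $p_1$ past each intervening chord endpoint until it is adjacent to $p_2$. Each such elementary swap is a legitimate $\overline{\text{2T}}_n$ move, since the intermediate endpoints all belong to chords other than $c$. The outcome is a diagram in which $c$ is an isolated small chord and the remaining $n-1$ chords occupy the complementary arc, which is topologically again a Wilson loop carrying only $n-1$ chords. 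By the inductive hypothesis, this sub-diagram is equivalent under $\overline{\text{2T}}_n$ to the canonical $(n-1)$-chord diagram, and appending the isolated $c$ yields the canonical $n$-chord diagram. The long virtual knot case is handled in the same way, by sliding one endpoint of $c$ along the Wilson line toward the other and then applying induction to the remaining $n-1$ chords on the resulting subsegment.

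The one delicate point is that $\overline{\text{2T}}_n$ does not allow swapping the two endpoints of the same chord past each other. This, however, is the reason the argument proceeds by sliding one endpoint of $c$ toward the other along a single arc rather than attempting to move it past $p_2$: every intermediate endpoint met along the way belongs to one of the other $n-1$ chords, so each transposition is a bona fide application of $\overline{\text{2T}}_n$. Hence no genuine obstruction appears, and the induction goes through cleanly to produce the canonical form shown.
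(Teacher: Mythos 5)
The paper itself gives no proof of this lemma; it is stated as a known fact and implicitly attributed to \cite{cdbook}. Your proof is therefore genuinely new content, and the overall strategy—induct on $n$, isolate one chord by a sequence of elementary 2T transpositions, then handle the rest by the inductive hypothesis—is the standard and correct way to establish it. You also correctly flag the one real subtlety: 2T never lets you push one endpoint of a chord past its own partner, which is why you must slide toward the partner along a single arc, never across it.

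There is one small imprecision in the inductive step for the Wilson loop case. After $c$ is made small, the remaining $n-1$ chords live on the complementary \emph{arc}, which is not itself a Wilson loop, so the inductive hypothesis for closed diagrams does not apply verbatim. Two standard repairs exist. The cleaner one is to first prove the Wilson \emph{line} case (there an arc stays an arc and the induction is immediate), and then derive the Wilson loop case by isolating a single chord $c$ and applying the line result to the complementary arc. Alternatively, one can close up the arc into a circle, invoke the loop inductive hypothesis, and note that any 2T move which slides an endpoint across the former gap corresponds in the original diagram to two legal 2T moves past $p_1$ and $p_2$; the chord $c$ may temporarily become non-small under such moves, but can be re-isolated at the end. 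Either patch makes the argument fully rigorous. Finally, it is worth adding a sentence observing that once all $n$ chords are isolated small chords, any two such configurations are 2T-equivalent (slide one small chord past another by two transpositions), so that the particular canonical arrangement in the figure is indeed reached.
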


Since these diagrams all vanish in the presence of $\overline{\text{1T}_n}$, it follows that $\frac{\overline{\mathscr{F}}_n}{\left<\overline{\text{1T}}_n,\overline{\text{6T}}_n \right>}\otimes \mathbb{Q} \cong \{ 0 \}$.  Hence the dual $\text{Hom}_{\mathbb{Z}} \left( \frac{\overline{\mathscr{F}}_n}{\left<\overline{\text{1T}}_n,\overline{\text{6T}}_n \right>}\otimes \mathbb{Q},\mathbb{Q}\right) \cong \{ 0 \}$ and we conclude that $\pi_n^*$ is an isomorphism.  This implies that $\text{Hom}_{\mathbb{Z}}(\mathscr{V}_n, \mathbb{Q}) \cong \text{Hom}_{\mathbb{Z}}(\mathscr{V}_{n+1}, \mathbb{Q})$ for all $n$.  In the case of virtual knots, we have by Lemma \ref{lowo} that $\text{Hom}_{\mathbb{Z}}(\mathscr{V}_3,\mathbb{Q})=<1>$.  In the case of long virtual knots, $\text{Hom}_{\mathbb{Z}}(\mathscr{V}_2,\mathbb{Q})=<1>$.  This completes the proof of Theorem \ref{bigthm}.

\begin{corollary}[Kauffman\cite{virtkauff},Chrisman\cite{myfirst}] If $v_n$ is the $n$-th Birman coefficient of the Jones-Kauffman polynomial, then $v_n$ is of Kauffman finite-type $\le n$, but not of GPV finite-type $\le m$ for any $m$.
\end{corollary}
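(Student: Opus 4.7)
The proof should be very short: both halves are essentially quotations, one from Kauffman's prior work and one from the main theorem just established. The plan is as follows.

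For the first assertion, I would simply cite the fact recalled in Section 1 (attributed to Kauffman \cite{virtkauff}) that the coefficient of $x^n$ in the power series expansion of the Birman substitution $A \mapsto e^x$ of the Jones-Kauffman polynomial is of Kauffman finite-type $\le n$. No further work is needed here since this is exactly the statement being asserted.

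For the second assertion, the argument is by contradiction using Theorem \ref{bigthm}. First I would recall (again from Section 1) that the Jones-Kauffman polynomial is invariant under the virtualization move; hence every coefficient in its power series expansion under any substitution, including each Birman coefficient $v_n$, is also invariant under the virtualization move. Next, for $n \ge 1$, the invariant $v_n$ is nonconstant on $\mathscr{K}$ (it is already nonconstant on classical knots, e.g.\ it separates the trefoil from the unknot). Now suppose for contradiction that $v_n$ were of GPV finite-type $\le m$ for some $m$. Then Theorem \ref{bigthm} applied to $v_n$ — which is a GPV finite-type invariant that is invariant under the virtualization move — forces $v_n$ to be constant, contradicting the previous sentence. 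Thus no such $m$ exists.

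There is no real obstacle; the whole content of the corollary has been packaged into Theorem \ref{bigthm}. The only minor point requiring any care is making explicit that virtualization invariance of the full Jones-Kauffman polynomial passes to each individual Birman coefficient, which is immediate since equality of two polynomials in $x$ forces equality of every coefficient.
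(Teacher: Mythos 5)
Your proposal is correct and takes essentially the same approach as the paper: the paper's entire proof is the one-line observation that the $v_n$ are virtualization invariant (citing Kauffman), leaving the reader to combine this with Theorem \ref{bigthm} exactly as you do. The only detail you make explicit that the paper leaves implicit is the nonconstancy of $v_n$, which is needed to apply Theorem \ref{bigthm}; your parenthetical example (trefoil vs.\ unknot) is not quite right for $n=1$ (where $v_1$ vanishes identically), but this does not affect the structure of the argument.
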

\begin{proof} The invariants $v_n$ are invariant under the virtualization move \cite{virtkauff}.
\end{proof}
\bibliographystyle{plain}
\bibliography{bib_vass}

\end{document}